\newtheorem{theorem}{Theorem}[section]
\newtheorem{lemma}[theorem]{Lemma}
\newtheorem{proposition}[theorem]{Proposition}
\newtheorem{corollary}[theorem]{Corollary}
\theoremstyle{definition}
\newtheorem{definition}[theorem]{Definition}
\newtheorem{example}[theorem]{Example}
\theoremstyle{remark}
\newtheorem{remark}[theorem]{Remark}
\numberwithin{equation}{section}
\newcommand{\K}{\mathbb{K}}
\newcommand{\sym}{\mathfrak{S}}
\newcommand{\rw}{{\rm rw}}
\newcommand{\rwr}{{{\rm rw}_\mathcal{R}}}
\newcommand{\set}{{\rm set}}
\newcommand{\iT}{\mathscr{T}}
\newcommand{\PP}{{\rm P}}
\newcommand{\bb}{\boldsymbol}
\newcommand{\ul}{\underline}
\newcommand{\B}{{\rm B}}
\newcommand{\D}{\text{D}}
\newcommand{\SIT}{\mathrm{SIT}}
\newcommand{\SET}{\mathrm{SET}}
\newcommand{\excise}[1]{} %{$\star$\textsc{#1}$\star$}
\begin{document}

%%%%%%%%%%%%%%%%%%%%%%%%%%%%%%%%%%%%%%%%%%%%%%%%%%%%%%%%%%%%
%  TITLE PAGE information
%%%%%%%%%%%%%%%%%%%%%%%%%%%%%%%%%%%%%%%%%%%%%%%%%%%%%%%%%%%%

%     [Short Title]{Full Title}

\title[Weak Bruhat interval modules of finite-type $0$-Hecke algebras]{Weak Bruhat interval modules of finite-type $0$-Hecke algebras and projective covers}
\author[J. Bardwell]{Joshua Bardwell}
\address{Department of Mathematics and Statistics, University of Otago, 730 Cumberland St., Dunedin 9016, New Zealand}
\email{barjo848@student.otago.ac.nz}

\author[D. Searles]{Dominic Searles}
\address{Department of Mathematics and Statistics, University of Otago, 730 Cumberland St., Dunedin 9016, New Zealand}
\email{dominic.searles@otago.ac.nz}

%    Information for second author

%\thanks{}

%    General info
\subjclass[2020]{Primary 05E10, 20C08, Secondary 05E05}

\date{November 16, 2023}

%\dedicatory{}

\keywords{$0$-Hecke algebra, Coxeter group, projective cover, quasisymmetric functions}

\begin{abstract}
We extend the recently-introduced weak Bruhat interval modules of the type A $0$-Hecke algebra to all finite Coxeter types. We determine, in a type-independent manner, structural properties for certain general families of these modules, with a primary focus on projective covers and injective hulls. We apply this approach to recover a number of results on type A $0$-Hecke modules in a uniform way, and obtain some additional results on recently-introduced families of type~A $0$-Hecke modules. 
\end{abstract}

\maketitle

%\tableofcontents

\section{Introduction}
The $0$-Hecke algebra $H_W(0)$ associated to a finite Coxeter group $W$ is a certain deformation of the group algebra of $W$. In \cite{Norton}, Norton classified the projective indecomposable $H_W(0)$-modules and the simple $H_W(0)$-modules up to isomorphism.  
Further structure was established by Fayers \cite{Fayers}, who proved that $H_W(0)$ is a Frobenius algebra, and introduced equivalences of the category $H_W(0)$-mod arising from three natural (anti-)automorphisms of $H_W(0)$. More recently, Huang \cite{Huang} gave a combinatorial interpretation of the projective indecomposable $H_W(0)$-modules in classical type in terms of ribbon tableaux.

The $0$-Hecke algebras in type A have attracted substantial recent interest in regard to their connection with the Hopf algebra of quasisymmetric functions. 
The quasisymmetric characteristic map \cite{DKLT} identifies the simple $0$-Hecke modules in type A with the fundamental quasisymmetric functions, which enjoy wide-ranging algebraic and combinatorial applications. There has been significant recent activity regarding constructing $0$-Hecke modules that correspond to notable bases of quasisymmetric functions, e.g., \cite{Bardwell.Searles, BBSSZ:modules, NSvWVW:modules, Searles:0Hecke, TvW:1}. 

There has also been significant work on understanding the structure of these modules. Each of \cite{Bardwell.Searles, BBSSZ:modules, NSvWVW:modules, Searles:0Hecke, TvW:1} provide a classification of indecomposability, and further work on indecomposability for the modules in \cite{TvW:1} and generalisations of these modules in \cite{TvW:2} appears in \cite{Konig} and \cite{CKNO:tableaux}.
In \cite{CKNO:projective}, Choi, Kim, Nam and Oh determined the projective covers for the modules in \cite{BBSSZ:modules, Searles:0Hecke, TvW:1, TvW:2} using the description of the type A projective indecomposable $H_W(0)$-modules in terms of ribbon tableaux due to Huang~\cite{Huang}. This technique was also employed in \cite{Kim.Yoo} to determine projective covers for further modules associated to quasisymmetric functions.

A more general family of $0$-Hecke modules in type A, called \emph{weak Bruhat interval modules}, were introduced by Jung, Kim, Lee and Oh \cite{JKLO}, for which the underlying spaces are intervals in left weak Bruhat order on the symmetric group. These modules have proven useful in providing a uniform approach to studying modules associated to families of quasisymmetric functions, and in particular their indecomposable decompositions.  
The images of weak Bruhat interval modules under compositions of the equivalences of categories in \cite{Fayers} were determined in \cite{JKLO};  
an important application stems from the fact that in certain cases, images of modules associated to one important family of quasisymmetric functions are modules for another. 
In particular, these functors were used  in \cite{JKLO} to recover and extend indecomposability results and determine injective hulls for a generalisation of the modules in \cite{Bardwell.Searles}, by realising them as images of modules in \cite{TvW:1, TvW:2}. 

In this paper, we expand on these results and techniques in a type-uniform manner. The projective indecomposable $H_W(0)$-modules play a significant role in our work; a main ingredient is a natural, type-independent realisation of these modules in terms of \emph{right descent classes}: those elements of $W$ with a specified set of right descents. First, we extend the notion of weak Bruhat interval modules to arbitrary finite Coxeter type, and show the projective indecomposable $H_W(0)$-modules are themselves weak Bruhat interval modules, which was shown for the type A case in \cite{JKLO}. 

Since the equivalences of categories in \cite{Fayers} are defined on $H_W(0)$-mod, the work of \cite{JKLO} in determining the images of weak Bruhat interval modules in type A extends naturally to arbitrary finite type. We extend this further to determine images of quotients and submodules of weak Bruhat interval modules under certain compositions of these functors, allowing applications to more general families of modules. 
We also identify a type-independent indecomposability criterion that covers a significant family of weak Bruhat interval modules, including several of the type A families of modules associated to quasisymmetric functions. 

We then determine, in a type-independent manner, the projective covers for a larger family of $H_W(0)$-modules. 
Our approach works directly with elements of the Coxeter group $W$ and left and right descents, and yields a description of the projective covers in terms of right descent classes in $W$. We then apply our result on images of quotients of weak Bruhat interval modules under the equivalences of categories to obtain the injective hulls of a corresponding family of $H_W(0)$-modules.

Finally, we specialise our attention to type A families of $0$-Hecke modules that are associated to bases of quasisymmetric functions. 
We apply the preceding results in this context to uniformly recover a number of known results on indecomposability, projective covers and injective hulls in the language of right descent sets. We additionally determine projective covers and injective hulls for certain new families of modules introduced in \cite{NSvWVW:modules}.

%%%%%%%%%%%%%%%%%%%%%%%%%%%%%%%%%%%%%%%%%%%%%%

\section{$0$-Hecke algebras for finite Coxeter systems}\label{sec:0Hecke}
A \emph{finite Coxeter system} $(W, S)$ is a finite group $W$ with generating set $S$ satisfying the relations $s^2=1$ for all $s\in S$, and $(st)_{m(s,t)}=(ts)_{m(s,t)}$ for all pairs of distinct elements $s,t\in S$, where $m(s,t)=m(t,s)\in \mathbb{Z}_{\ge 2}$ and $(st)_{m(s,t)}$ denotes the alternating product of $s$ and $t$ with $m(s,t)$ factors.   
Let $w \in W$. An expression $w = s_1\cdots s_k$ with $s_1, \ldots , s_k\in S$ is a \emph{reduced word} for $w$ if $w$ cannot be expressed as a product of elements of $S$ with fewer than $k$ terms. The \emph{length} of $w$, denoted $\ell(w)$, is the number of elements of $S$ used in any reduced word for $w$, that is, if $s_1\cdots s_k$ is a reduced word for $w$, then $\ell(w)=k$.

For each $w \in W$ and $s \in S$, either $\ell(sw) = \ell(w) - 1$ or $\ell(sw) = \ell(w) + 1$. In the former case, $s$ is a \emph{left descent} of $w$, and in the latter case, $s$ is a \emph{left ascent} of $w$. Similarly, $s$ is a \emph{right descent} of $w$ if $\ell(ws) = \ell(w)-1$, and $s$ is a \emph{right ascent} of $w$ if $\ell(ws) = \ell(w)+1$. The set of left descents of $w$ is denoted $\D_L(w)$, and the set of right descents of $w$ is denoted $\D_R(w)$. 

For $I \subseteq S$, the \emph{right descent class} $\mathcal{D}_I$ comprises the elements $w \in W$ such that $\D_R(w) = I$. 
   Let $\mathcal{D}_I^J$ denote the union of right descent classes $\mathcal{D}_X$ such that $I \subseteq X \subseteq J$, that is,
\[
\mathcal{D}_I^J = \{ w \in W :  I \subseteq \D_R(w) \subseteq J \}.
\]

The \emph{parabolic subgroup} $W_I$ is the subgroup of $W$ generated by $I$. Let $w_0(I)$ denote the longest element in $W_I$, that is, $\ell(w) < \ell(w_0(I))$ for all $w \in W_I\setminus \{w_0(I)\}$. The element $w_0(S)$ is the longest element in $W$, and is denoted by $w_0$.

Let $\K$ be a field. The \emph{0-Hecke algebra} $H_W(0)$ of a finite Coxeter system $(W,S)$ is the associative $\K$-algebra  generated by $\{\pi_s : s \in S\}$ subject to the relations 
 \begin{equation} \label{eq:coxeterrelations}
  \pi_s^2 = \pi_s  \hspace{3mm} \text{ and } \hspace{3mm} (\pi_s \pi_t)_{m(s,t)} = (\pi_t \pi_s)_{m(s,t)} 
 \end{equation}
 for all distinct $s, t \in S$. 
 
 For example, when $W$ is the symmetric group $\sym_n$ and $S$ the set $\{s_1, \ldots ,  s_{n-1}\}$ of simple transpositions, the relations \eqref{eq:coxeterrelations} are
\begin{align*}
\pi_{s_i}^2 &= \pi_{s_i}  & \text{ for } i \in [n-1],\\
\pi_{s_i}\pi_{s_j} &= \pi_{s_j} \pi_{s_i} & \text{ for }  |i - j| \geq 2, \\
\pi_{s_i}\pi_{s_{i+1}}\pi_{s_i} &= \pi_{s_{i+1}}\pi_{s_i}\pi_{s_{i+1}} & \text{ for } i \in [n-2].
\end{align*}
 
An alternative set of generators for $H_W(0)$ is given by $\{\overline{\pi}_s : s \in S\}$, where $\overline{\pi}_s = \pi_s -1$. The relations for this generating set are $\overline{\pi}_s^2 = - \overline{\pi}_s$ and $(\overline{\pi}_s \overline{\pi}_t)_{m(s,t)} = (\overline{\pi}_t \overline{\pi}_s)_{m(s,t)}$ for all distinct $s, t \in S$. Given $w\in W$ with reduced word $w = s_{1}\ldots s_{k}$, define $\pi_w$ to be the product $\pi_{s_1} \cdots \pi_{s_k}$, and define $\overline{\pi}_w$ to be $\overline{\pi}_{s_1} \cdots \overline{\pi}_{s_k}$. The following result is due to Norton \cite{Norton}.

\begin{theorem}\cite[Theorem 4.12(2)]{Norton}\label{thm:nortonresults} Let $(W,S)$ be a finite Coxeter system and let $I \subseteq S $. The left ideal $\mathcal{P}_I \coloneqq H_W(0)\pi_{w_0(I)} \overline{\pi}_{w_0(S \setminus I)}$ is a projective indecomposable $H_W(0)$-module with $\K$-basis $\{\pi_w \overline{\pi}_{w_0(S \setminus I)} : w \in \mathcal{D}_I\}$.
\end{theorem}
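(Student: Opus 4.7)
The plan is to set $e_I := \pi_{w_0(I)}\overline{\pi}_{w_0(S\setminus I)}$, so that $\mathcal{P}_I = H_W(0)e_I$, and to prove the three assertions---basis description, projectivity, and indecomposability---by showing that (up to sign) $e_I$ is a primitive idempotent of $H_W(0)$. Throughout I would use the standard multiplication rules derivable from the defining relations: for $s\in S$ and $w\in W$, $\pi_s\pi_w=\pi_w$ if $s\in\D_L(w)$ and $\pi_s\pi_w=\pi_{sw}$ otherwise, with the analogous rule for $\overline{\pi}$ (gaining a sign $-1$ in the descent case); these imply that $\{\pi_w : w\in W\}$ is a $\K$-basis of $H_W(0)$, so $\mathcal{P}_I$ is spanned by $\{\pi_w e_I : w\in W\}$.

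The technical heart of the basis argument is the identity $\pi_s\overline{\pi}_{w_0(S\setminus I)}=0$ for $s\in S\setminus I$, immediate from $\pi_s=1+\overline{\pi}_s$ together with $\overline{\pi}_s\overline{\pi}_{w_0(S\setminus I)}=-\overline{\pi}_{w_0(S\setminus I)}$ (since $s\in\D_L(w_0(S\setminus I))$). Writing $\pi_v=\pi_{vs}\pi_s$ for any $s\in\D_R(v)\cap(S\setminus I)$ propagates this to $\pi_v\overline{\pi}_{w_0(S\setminus I)}=0$ whenever $\D_R(v)\not\subseteq I$. Iteratively applying the $\pi$-rule along a reduced word for $w_0(I)$ shows $\pi_w\pi_{w_0(I)}=\pi_v$ where $v$ is the longest element of the left coset $wW_I$ (so in particular $I\subseteq\D_R(v)$); hence $\pi_w e_I=\pi_v\overline{\pi}_{w_0(S\setminus I)}$ vanishes unless $\D_R(v)=I$, i.e.\ $v\in\mathcal{D}_I$, reducing the spanning set to $\{\pi_u\overline{\pi}_{w_0(S\setminus I)} : u\in\mathcal{D}_I\}$. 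For linear independence I would expand $\overline{\pi}_{w_0(S\setminus I)}=\pi_{w_0(S\setminus I)}+(\text{Bruhat-lower terms})$ via a reduced word; for $u\in\mathcal{D}_I$ the condition $\D_R(u)\cap(S\setminus I)=\emptyset$ forces the top contribution $\pi_u\pi_{w_0(S\setminus I)}=\pi_{u\cdot w_0(S\setminus I)}$ to be length-additive, producing distinct leading terms for distinct $u\in\mathcal{D}_I$ with all other contributions of strictly smaller length. A standard triangularity argument in length order then yields independence.

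For projectivity and indecomposability I would verify first that $e_I^2=\pm e_I$, by a direct computation combining the left-annihilation identity above with its right-action counterpart $\overline{\pi}_{w_0(S\setminus I)}\pi_s=0$ for $s\in S\setminus I$; thus $\mathcal{P}_I$ is a direct summand of $H_W(0)$ and hence projective. Indecomposability amounts to showing the endomorphism algebra $e_I H_W(0)e_I\cong\mathrm{End}_{H_W(0)}(\mathcal{P}_I)^{\mathrm{op}}$ is one-dimensional: applying the spanning reduction to $e_I\pi_w e_I$ from both sides collapses every such element to a scalar multiple of $e_I$. The dimension identity $\sum_{I\subseteq S}|\mathcal{D}_I|=|W|=\dim_\K H_W(0)$ provides a consistency check, in agreement with the expected decomposition $H_W(0)=\bigoplus_{I\subseteq S}\mathcal{P}_I$. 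The main obstacle will be this final primitivity step: carefully coordinating the left- and right-annihilation identities on both sides of $e_I$, and tracking the parabolic double-coset combinatorics so as to rule out all contributions to $e_I H_W(0)e_I$ beyond the one-dimensional span $\K e_I$.
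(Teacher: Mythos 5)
The paper offers no proof of this statement---it is quoted directly from Norton---so your attempt is measured on its own terms. The first half of your plan is sound: the identities $\pi_s\overline{\pi}_{w_0(S\setminus I)}=0$ for $s\in S\setminus I$, $\pi_w\pi_{w_0(I)}=\pi_v$ with $v$ the longest element of $wW_I$, and the resulting reduction of the spanning set to $\{\pi_u\overline{\pi}_{w_0(S\setminus I)}:u\in\mathcal{D}_I\}$ are all correct, and your length-triangularity argument for independence (leading term $\pi_{uw_0(S\setminus I)}$, length-additive because $\D_R(u)\cap(S\setminus I)=\emptyset$) goes through. This is close to Norton's actual derivation of the basis.

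The projectivity and indecomposability steps, however, both rest on false claims. Take $W=\sym_4$ and $I=\{s_1,s_3\}$, so $e_I=\pi_{s_1}\pi_{s_3}\overline{\pi}_{s_2}$. Then $\pi_{s_1}\pi_{s_3}\overline{\pi}_{s_2}\pi_{s_1}\pi_{s_3}=\pi_{s_1}\pi_{s_3}\pi_{s_2}\pi_{s_1}\pi_{s_3}-\pi_{s_1}\pi_{s_3}=\pi_{4231}-\pi_{2143}$, since $s_1s_3s_2s_1s_3=s_1s_2s_3s_2s_1$ is reduced and equals $4231$, while $2143=s_1s_3$. Hence $e_I^2=\pi_{4231}\overline{\pi}_{s_2}-e_I$, and because $s_2\notin\D_R(4231)$ and $4231\neq 2143$, the term $\pi_{4231}\overline{\pi}_{s_2}$ is a basis element of $\mathcal{P}_I$ distinct from $e_I$; so $e_I^2$ is not a scalar multiple of $e_I$. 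Your right-annihilation identity $\overline{\pi}_{w_0(S\setminus I)}\pi_s=0$ only applies to $s\in S\setminus I$, whereas the middle factor $\pi_{w_0(I)}$ in $e_I^2$ is a word in generators from $I$, so it gives no traction. Projectivity has to come instead from the global decomposition $H_W(0)=\bigoplus_{I\subseteq S}H_W(0)e_I$: one shows the ideals jointly span $H_W(0)$ and then the dimension count $\sum_I|\mathcal{D}_I|=|W|$ forces the sum to be direct---what you call a consistency check is the essential step. The primitivity argument is also unsalvageable as stated: $\mathrm{End}_{H_W(0)}(\mathcal{P}_I)$ need not be one-dimensional. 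In the same example $\mathcal{P}_{\{s_1,s_3\}}\cong\B(2143,4231)$ has top $\B(2143,2143)$ and socle $\B(4231,4231)$, and since $\D_L(2143)=\D_L(4231)=\{s_1,s_3\}$ these are isomorphic simple modules; composing the projection onto the top with the inclusion of the socle gives a nonzero endomorphism with one-dimensional image, linearly independent from the identity, so $\dim_\K\mathrm{End}(\mathcal{P}_I)\geq 2$. Indecomposability requires the endomorphism ring to be local---equivalently, the top to be simple, which is how Norton (and Proposition 3.6 of this paper) argue---not that $e_IH_W(0)e_I=\K e_I$.
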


The set $\{\mathcal{P}_I : I \subseteq S\}$ is a complete list of non-isomorphic projective indecomposable $H_W(0)$-modules. For $I\subseteq J \subseteq S$, let $\mathcal{P}_I^J$ denote the $H_W(0)$-module $H_W(0)\pi_{w_0(I)}\overline{\pi}_{w_0(S \setminus J)}$. 

The following result is entirely analogous to that of Huang in \cite[Theorem 3.2]{Huang};  for our purposes, it is more convenient to work with generators $\pi_s$ rather than $\overline{\pi}_s$, and assign different roles to the indexing sets $I$ and $J$. 

\begin{theorem}
    Let $I \subseteq J \subseteq S$. Then $\mathcal{P}_I^J$ has a $\K$-basis
    \begin{align}
        \{\pi_w \overline{\pi}_{w_0(S \setminus J)} : w \in W \text{ and } I \subseteq \textup{D}_R(w) \subseteq J\}, \label{eq:projbasis}
    \end{align}
    and decomposes as a direct sum of projective indecomposable modules via the formula
\begin{align}
    \mathcal{P}_I^J \cong \bigoplus_{I \subseteq X \subseteq J} \mathcal{P}_X. \label{eq:altdecomp}
\end{align}
\end{theorem}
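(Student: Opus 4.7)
The plan is to establish both the spanning claim of \eqref{eq:projbasis} and its linear independence via a triangularity argument, then to compute $\mop(\mathcal{P}_I^J)$ and conclude \eqref{eq:altdecomp} by comparing dimensions of the source and target of a lifted surjection.

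For the spanning claim, the key identity is $\pi_s \overline{\pi}_{w_0(K)} = 0$ for $s \in K$, which follows from $\pi_s \pi_{w_0(K)} = \pi_{w_0(K)}$ and $\overline{\pi}_s \overline{\pi}_{w_0(K)} = -\overline{\pi}_{w_0(K)}$ (both because $s$ is a left descent of $w_0(K)$) combined with $\pi_s = \overline{\pi}_s + 1$. Taking $K = S\setminus J$ and factoring $\pi_w = \pi_{w'}\pi_s$ for any $s\in \D_R(w)\cap(S\setminus J)$ yields $\pi_w\overline{\pi}_{w_0(S\setminus J)} = 0$ when $\D_R(w)\not\subseteq J$. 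Conversely, since $\mathcal{P}_I^J$ is cyclic, each element has the form $\pi_v\pi_{w_0(I)}\overline{\pi}_{w_0(S\setminus J)}$; writing $v = v'u$ with $u\in W_I$ and $v'$ a minimal coset representative of $vW_I$, the identity $\pi_u\pi_{w_0(I)} = \pi_{w_0(I)}$ rewrites this as $\pi_{v'w_0(I)}\overline{\pi}_{w_0(S\setminus J)}$ with $I\subseteq \D_R(v'w_0(I))$, so \eqref{eq:projbasis} spans $\mathcal{P}_I^J$. For linear independence, I would use the expansion $\overline{\pi}_{w_0(K)} = \sum_{v\in W_K}(-1)^{\ell(w_0(K))-\ell(v)}\pi_v$: for $w$ with $\D_R(w)\cap K = \emptyset$, the product $\pi_w\overline{\pi}_{w_0(K)}$ expands in the $\pi$-basis of $H_W(0)$ with leading term $\pi_{ww_0(K)}$ of length $\ell(w)+\ell(w_0(K))$ (since $w$ is a minimal coset representative of $W/W_K$, so $ww_0(K)$ is a reduced product) and all other terms of strictly smaller Coxeter length. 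Distinct $w$ in the spanning set produce distinct leading terms, so the expansion matrix in the $\pi$-basis is triangular, giving linear independence. Hence \eqref{eq:projbasis} is a basis and $\dim\mathcal{P}_I^J = \sum_{I\subseteq X\subseteq J}|\mathcal{D}_X|$.

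For each $X\subseteq S$ let $C_X$ denote the one-dimensional simple $H_W(0)$-module on which $\pi_s$ acts as $1$ for $s\in X$ and $0$ otherwise. A direct calculation shows that the cyclic generator $e = \pi_{w_0(I)}\overline{\pi}_{w_0(S\setminus J)}$ acts on $C_X$ as the scalar $(-1)^{\ell(w_0(S\setminus J))}$ if $I\subseteq X\subseteq J$ and as $0$ otherwise. The nonzero action case yields a well-defined nonzero homomorphism $\mathcal{P}_I^J\to C_X$. Otherwise, one exhibits an element of the right annihilator of $e$ acting nontrivially on $C_X$: when $I\not\subseteq X$, any $s\in I\setminus X$ gives $\overline{\pi}_s e = 0$ (since $\pi_s e = e$) while $\overline{\pi}_s$ acts as $-1$ on $C_X$; when $X\not\subseteq J$ with $I\subseteq X$, the element $u=w_0(X)w_0(I)^{-1}\in W_X$ satisfies $\pi_u e = \pi_{w_0(X)}\overline{\pi}_{w_0(S\setminus J)} = 0$ (by the vanishing identity above) while $\pi_u$ acts as $1$ on $C_X$. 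Hence $\mop(\mathcal{P}_I^J)\cong\bigoplus_{I\subseteq X\subseteq J}C_X$, and by projectivity of each $\mathcal{P}_X$ (Theorem~\ref{thm:nortonresults}) together with Nakayama's lemma, the canonical surjection onto this top lifts to a surjection $\Phi\colon \bigoplus_{I\subseteq X\subseteq J}\mathcal{P}_X \twoheadrightarrow \mathcal{P}_I^J$. Both sides have dimension $\sum_{I\subseteq X\subseteq J}|\mathcal{D}_X|$, so $\Phi$ is an isomorphism, proving \eqref{eq:altdecomp}. The main obstacles are the triangularity analysis underlying linear independence (requiring careful handling of Demazure-type products $\pi_w\pi_v$) and the $\Hom$-space computation (requiring explicit elements of the right annihilator of $e$ that distinguish simples outside $[I,J]$).
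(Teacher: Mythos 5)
Your proof is correct, but it reaches the theorem by a genuinely different route than the paper, so let me compare. For the basis, the paper derives the explicit formula \eqref{eq:action5} for the action of each $\pi_s$ on the elements $\pi_w \overline{\pi}_{w_0(S \setminus J)}$ and reads off spanning from it, leaving linear independence essentially implicit; your triangularity argument via the expansion $\overline{\pi}_{w_0(K)} = \sum_{v \in W_K} (-1)^{\ell(w_0(K)) - \ell(v)} \pi_v$ is more explicit and in fact stronger than you state: since the $w$ appearing in \eqref{eq:projbasis} are minimal-length representatives of \emph{distinct} cosets $wW_{S\setminus J}$, the supports of the expansions $\pi_w\overline{\pi}_{w_0(S\setminus J)} = \sum_{v \in W_{S\setminus J}} (-1)^{\ell(w_0(S\setminus J))-\ell(v)}\pi_{wv}$ in the $\pi$-basis of $H_W(0)$ are pairwise disjoint, so independence is immediate without even ordering by length. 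For the decomposition \eqref{eq:altdecomp}, the paper filters $\mathcal{P}_I^J$ by the right descent classes $\mathcal{D}_X$ ordered compatibly with inclusion, identifies each subquotient with $\mathcal{P}_X$ via \eqref{eq:action5}, and splits the filtration because the subquotients are projective; you instead compute $\Hom(\mathcal{P}_I^J, C_X)$ for every simple module $C_X$, determine $\mop(\mathcal{P}_I^J)$, and compare dimensions against the lifted surjection from $\bigoplus_{I\subseteq X\subseteq J}\mathcal{P}_X$. Both arguments are sound; the filtration is shorter once the action formula is in hand, while your approach has the side benefit of exhibiting the top of $\mathcal{P}_I^J$ explicitly, which is exactly the kind of information exploited later in Section~\ref{sec:projectivecovers}. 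One terminological slip: the elements $\overline{\pi}_s$ and $\pi_u$ you produce lie in the \emph{left} annihilator of $e$ (you multiply $e$ on the left), not the right annihilator; the argument itself is unaffected.
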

 \begin{proof}
     Let $w \in W$ satisfy $I \subseteq \D_R(w) \subseteq J$. Using the fact that $\pi_s\overline{\pi}_s = 0$ for all $s \in S$, it is straightforward to establish that
 \begin{align}
   \pi_s  (\pi_w\overline{\pi}_{w_0(S \setminus J)}) =
  \begin{cases}
                                   \pi_w \overline{\pi}_{w_0(S \setminus J)} & \text{if $s \in \D_L(w)$,} \\     
                                   \pi_{sw} \overline{\pi}_{w_0(S \setminus J)} & \text{if $s \notin \D_L(w)$ and $\D_R(sw) \subseteq J $,} \\ 
                                   0 & \text{if $s \notin \D_L(w)$ and $\D_R(sw) \nsubseteq J $,}                
  \end{cases} \label{eq:action5}
  \end{align}
for all $s \in S$, and therefore \eqref{eq:projbasis} is a $\K$-basis for $\mathcal{P}_I^J$. 

For the decomposition, let $X_1, \ldots, X_\ell$ denote the subsets of $S$ such that $I \subseteq X_i \subseteq J$ for all $i \in [\ell]$, ordered via any total ordering satisfying $i\le j$ whenever $X_i \subseteq X_j$. Let $M_i$ denote the $H_W(0)$-module defined as the $\K$-span of the disjoint union 
  \begin{equation}\nonumber
      \bigcup_{j \geq i}  \mspace{1mu} 
      \left\{\pi_w \overline{\pi}_{w_0(S \setminus J)} : w \in \mathcal{D}_{X_j}\right\} 
  \end{equation}
  and consider the filtration $\mathcal{P}_I^J = M_1 \supseteq M_2 \ldots \supseteq M_\ell \supseteq M_{\ell+1} = 0$. Each quotient  $M_i / M_{i+1}$ has basis $\{\pi_w \overline{\pi}_{w_0(S \setminus J)} : w \in \mathcal{D}_{X_{i}}\}$. Thus from \eqref{eq:action5} one has $M_i / M_{i+1} \cong \mathcal{P}_{X_i}$ and the isomorphism \eqref{eq:altdecomp} follows.
 \end{proof}

%%%%%%%%%%%%%%%%%%%%%%%%%%%%%%%%%%%%%%%%%%%%%%%%%%%%%%%%%

\section{Weak Bruhat interval modules}\label{sec:bruhat}

In this section, we extend the type A weak Bruhat interval modules of Jung, Kim, Lee and Oh \cite{JKLO} to arbitrary finite type. We identify an indecomposability criterion covering an important family of weak Bruhat interval modules, and extend results in \cite{JKLO} concerning functors on the category $H_W(0)$-mod to submodules and quotients of weak Bruhat interval modules in finite type. 

The \emph{left weak Bruhat order $\leq_L$ on $W$} is the partial order defined by $u\le_L v$ if there exist some $s_1, \ldots , s_k\in S$ such that $v = s_1 \cdots s_k u$ and $\ell(v) = \ell(u)+k$. Given $u, v\in W$ with $u\le_L v$, the \emph{left weak Bruhat interval} is the set $[u,v]_L = \{w\in W : u\le_L w \le_L v\}$.

\begin{definition}\label{def:intervalmodule} Let $[u,v]_L \subseteq W$. The \emph{weak Bruhat interval module} $\B(u,v)$ is the $H_W(0)$-module $\K[u,v]_L$ equipped with the $H_W(0)$-action defined by
\begin{align}
  \pi_s  w =
  \begin{cases}
                                   w & \text{if $s \in \D_L(w)$,} \\      
                                   s w & \text{if $s \notin \D_L(w) $ and $sw \in [u,v]_L$,} \\
                                   0 & \text{if $s \notin \D_L(w) $ and $sw \notin [u,v]_L$,} 
  \end{cases} \label{eq:bruhataction}
\end{align}

 for all $s \in S$ and $w\in [u,v]_L$. 
\end{definition}

That \eqref{eq:bruhataction} defines an action of $H_W(0)$ follows from Theorems 3.1 and 3.3 in \cite{Defant.Searles}, noting that the ascent-compatibility condition in \cite{Defant.Searles} does not depend on whether a negative sign appears when elements are fixed by $\pi_s$. Note that the type A case of Definition~\ref{def:intervalmodule} is precisely \cite[Definition 3.1(1)]{JKLO}.

\begin{example}  
Let $W=\sym_4$; we write elements of symmetric groups in one-line notation. Figure~\ref{fig:digraph} depicts the action of $\pi_1, \pi_2$ and $\pi_3$ (where $\pi_i$ denotes $\pi_{s_i}$) on the basis $[1324, 1432]_L$ of $\B(1324, 1432)$ and on the basis $[3142, 4231]_L$ of $\B(3142, 4231)$. Following the convention in \cite{JKLO}, we draw Hasse diagrams from top to bottom, rather than bottom to top, and so the $0$-Hecke operators move elements downwards (or send them to zero). 
\end{example}

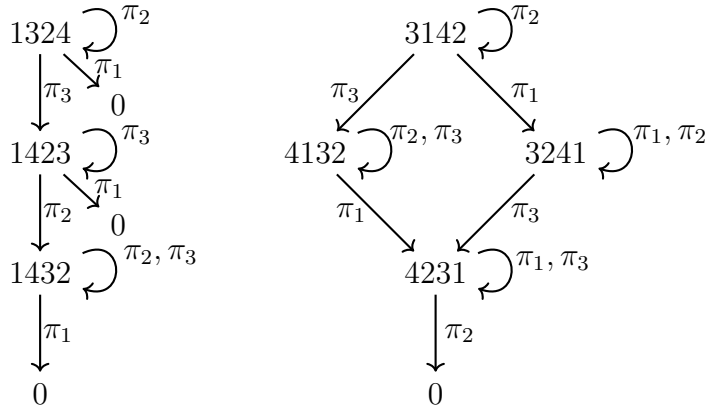
\begin{figure}[h]
\centering
\begin{tikzpicture}[thick, scale=0.8]
  
  \node (1) at (0,2) {$1432$};
  \node (2) at (0,4) {$1423$};
  \node (3) at (0,6) {$1324$};
  \node (30) at (1.3,4.8) {$0$};
  \node (20) at (1.3,2.8) {$0$};
  \node (10) at (0,0) {$0$};
  
 \node (s1) at (0.3,3) {$\pi_2$};
 \node (s2) at (0.3,5) {$\pi_3$};
 
  \node (p1) at (2,2.3) {$\pi_2, \pi_3$};
  \node (p2) at (1.6,4.3) {$\pi_3$};
  \node (p3) at (1.6,6.3) {$\pi_2$};
  
  \node (p1*) at (0.3,1) {$\pi_1$};
  \node (p2*) at (1.15,3.4) {$\pi_1$};
  \node (p3*) at (1.15,5.4) {$\pi_1$};
  
\draw[->] (3)--(2);
\draw[->] (2)--(1);

\draw[->] (1)--(10);
\draw[->] (2)--(20);
\draw[->] (3)--(30);

\draw [->] (1) to [out=25,in=340, looseness=3.5] (1);
\draw [->] (2) to [out=25,in=340, looseness=3.5] (2);
\draw [->] (3) to [out=25,in=340, looseness=3.5] (3);

\end{tikzpicture} 
\hspace{6mm}
\begin{tikzpicture}[thick, scale=0.8]
  
  \node (1) at (2, 2) {$4231$};
  \node (2) at (0, 4) {$4132$};
  \node (2') at (4, 4) {$3241$};
  \node (3) at (2, 6) {$3142$};
  \node (10) at (2, 0) {$0$};
  
 \node (s1) at (3.5, 3) {$\pi_3$};
 \node (s2) at (3.5, 5) {$\pi_1$};
 
  \node (p1) at (0.6, 3) {$\pi_1$};
  \node (p2) at (1.85, 4.3) {$\pi_2, \pi_3$};
  \node (p22) at (5.9, 4.3) {$\pi_1, \pi_2$};
  \node (p3) at (3.5, 6.3) {$\pi_2$};
  
  \node (p1*) at (2.4, 1) {$\pi_2$};
  \node (p1**) at (3.95, 2.2) {$\pi_1, \pi_3$};
 % \node (p2*) at (1.15, 3.3) {$\pi_1$};
  \node (p3*) at (0.5, 5) {$\pi_3$};
  
\draw[->] (3)--(2);
\draw[->] (2)--(1);
\draw[->] (2')--(1);
\draw[->] (3)--(2');

\draw[->] (1)--(10);

\draw [->] (1) to [out=25,in=340, looseness=3.5] (1);
\draw [->] (2) to [out=25,in=340, looseness=3.5] (2);
\draw [->] (3) to [out=25,in=340, looseness=3.5] (3);
\draw [->] (2') to [out=25,in=340, looseness=3.5] (2');

\end{tikzpicture} 
\caption{The $H_{\sym_4}(0)$-action on $\K$-bases for $\B(1324, 1432)$ and $\B(3142, 4231)$.}
\label{fig:digraph}
\end{figure}

We next obtain a natural interpretation of the $H_W(0)$-modules $\mathcal{P}_I^J$, and thus the projective indecomposable $H_W(0)$-modules $\mathcal{P}_I$, as weak Bruhat interval modules. Given $I\subseteq J \subseteq S$, the union $\mathcal{D}_I^J$ of right descent classes is an interval in left weak Bruhat order \cite[Theorem 6.2]{Bjorner.Wachs}. In particular, each right descent class $\mathcal{D}_I$ itself is an interval in left weak Bruhat order.

\begin{example}
Figure~\ref{fig:S4} shows the poset $(\sym_4, \le_L)$ with each $\mathcal{D}_I$ coloured separately, aside from the two single-element classes which are uncoloured.
\end{example}

\begin{figure}[h]
\centering
\begin{tikzpicture}[thick, scale=0.75]
\tikzstyle{every node} = [draw, circle,scale = 0.3]

  \node (1234) at (0,0.5) {$$};
  
  \node (1243) at (-1.5,1.8) {$$};
  \node (1324) at (0,1.8) {$$};
  \node (2134) at (1.5,1.8) {$$};
  
  \node (1342) at (-3,3.1) {$$};
  \node (1423) at (-1.5,3.1) {$$};
  \node (2143) at (0,3.1) {$$};
  \node (2314) at (1.5,3.1) {$$};
  \node (3124) at (3,3.1) {$$};
  
  \node (1432) at (-3.8,4.4) {$$};
  \node (2341) at (-2.23,4.4) {$$};
  \node (3142) at (-0.78,4.4) {$$};
  \node (2413) at (0.75,4.4) {$$};
  \node (3214) at (2.25,4.4) {$$};
  \node (4123) at (3.8,4.4) {$$};
  
  \node (2431) at (-3,5.7) {$$};
  \node (3241) at (-1.5,5.7) {$$};
  \node (3412) at (0,5.7) {$$};
  \node (4132) at (1.5,5.7) {$$};
  \node (4213) at (3,5.7) {$$};
  
  \node (3421) at (-1.5,7) {$$};
  \node (4231) at (0,7) {$$};
  \node (4312) at (1.5,7) {$$};
  
  \node (4321) at (0,8.2) {$$};
  
  \tikzstyle{every node} = [draw, circle,scale = 1]
  
  \draw[dashed, gray] (3421) -- (4321);
  \draw[dashed, gray] (4231) -- (4321);
  \draw[dashed, gray] (4312) -- (4321);
  \draw[dashed, gray] (1432) -- (1342); 
  \draw[dashed, gray] (2341) -- (2431); 
  \draw[dashed, gray] (2341) -- (3241); 
  \draw[dashed, gray] (3421) -- (3421); 
  \draw[dashed, gray] (4312) -- (3412); 
  \draw[dashed, gray] (1234) -- (2134); 
  \draw[dashed, gray] (3412) -- (3421); 
  \draw[dashed, gray] (1243) -- (2143); 
  \draw[dashed, gray] (2134) -- (2143); 
  \draw[dashed, gray] (3214) -- (3124);   
  \draw[dashed, gray] (4213) -- (4123);   
  \draw[dashed, gray] (2314) -- (3214);   
  \draw[dashed, gray] (1423) -- (1432);   
  \draw[dashed, gray] (4123) -- (4132);   
  \draw[dashed, gray](1234) -- (1243);
  \draw[dashed, gray](1234) -- (1324);  

  \draw [Purple] (1243) -- (1342) -- (2341);
  \draw [orange] (1432) -- (2431) -- (3421);
  \draw [Maroon] (3214) -- (4213) -- (4312);
  \draw [Turquoise] (2134) -- (3124) -- (4123);
  \draw [WildStrawberry] (1324) -- (2314) -- (2413) -- (3412);
  \draw [WildStrawberry] (1324) -- (1423) -- (2413);
  \draw[ForestGreen] (4231) -- (3241) -- (3142);
  \draw[ForestGreen] (4231) -- (4132) -- (3142) -- (2143);
  
 \draw (4321) node[draw=none,above, scale=0.8] {$1234$};
 
 %%%%%%%%%%%%%%%%%%%%%%%%%%%%%%%
 
\draw (3421) node[draw=none,above left, scale=0.8] {$2134$};
\draw (4231) node[draw=none, above left, scale=0.8] {$1324$}; 
\draw (4312) node[draw=none,above right, scale=0.8] {$1243$};

%%%%%%%%%%%%%%%%%%%%%%%%%%%%%%%%

\draw (2431) node[draw=none,above left, scale=0.8] {$3124$};
\draw (3241) node[draw=none,above left, scale=0.8] {$2314$};
\draw (3412) node[draw=none,above, scale=0.8] {$2143$};
\draw (4132) node[draw=none,above right, scale=0.8] {$1423$};
\draw (4213) node[draw=none,above right, scale=0.8] {$1342$};

%%%%%%%%%%%%%%%%%%%%%%%%%%%%%%%%
 
\draw (1432) node[draw=none,left, scale=0.8] {$4123$};
\draw (2341) node[draw=none,left, scale=0.8] {$3214$};
\draw (3142) node[draw=none,left, scale=0.8] {$2413$};
\draw (2413) node[draw=none,right, scale=0.8] {$3142$};
\draw (3214) node[draw=none,right, scale=0.8] {$2341$}; 
\draw (4123) node[draw=none,right, scale=0.8] {$1432$};

%%%%%%%%%%%%%%%%%%%%%%%%%%%%%%%%
 
\draw (1342) node[draw=none,below left, scale=0.8] {$4213$};
\draw (1423) node[draw=none,below left, scale=0.8] {$4132$};
\draw (2143) node[draw=none,below, scale=0.8] {$3412$};
\draw (2314) node[draw=none,below right, scale=0.8] {$3241$};  
\draw (3124) node[draw=none,below right, scale=0.8] {$2431$};

%%%%%%%%%%%%%%%%%%%%%%%%%%%%%%%%

 \draw (1243) node[draw=none,below, scale=0.8] {$4312$};
 \draw (1324) node[draw=none,below right , scale=0.8] {$4231$};
 \draw (2134) node[draw=none, below right, scale=0.8] {$3421$};

%%%%%%%%%%%%%%%%%%%%%%%%%%%%%%%% 
 
\draw (1234) node[draw=none,below, scale=0.8] {$4321$}; 
 
\end{tikzpicture}
\vspace{-5mm}
\caption{The poset $(\sym_4, \leq_L)$ and the right descent classes $\mathcal{D}_I$.}
\label{fig:S4}
\end{figure}
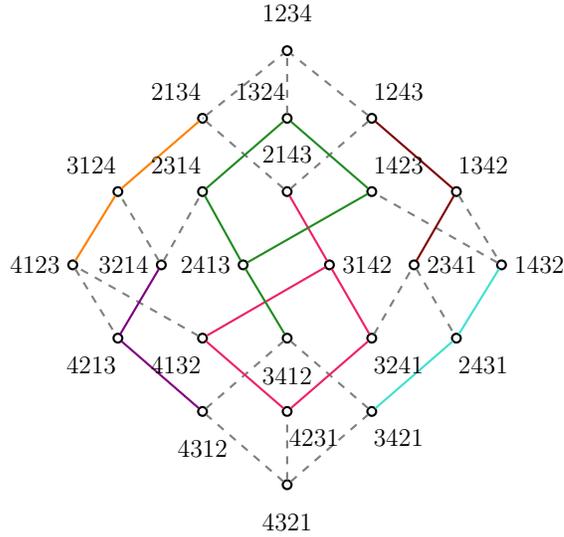

The shortest element in $\mathcal{D}_I$ is $w_0(I)$, and the longest element in $\mathcal{D}_I$ is $w_0w_0(S\setminus I)$. 
This notation is potentially confusing due to the conflict between $w_0$ as the longest element in $W$ and $w_0$ as the function returning the longest element in $W$ with given right descents, and becomes especially cumbersome when we multiply or conjugate these elements by $w_0$. Therefore, for the remainder of the paper, we denote the shortest element in $\mathcal{D}_I$ by $u_I$ and the longest element in $\mathcal{D}_I$ by $v_I$.

\begin{theorem}\label{thm:modified} Let $I \subseteq J \subseteq S$. Then $\mathcal{P}_I^J \cong \B(u_I,v_J)$.
\end{theorem}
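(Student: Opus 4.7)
The plan is to define a $\K$-linear map $\varphi: \B(u_I,v_J) \to \mathcal{P}_I^J$ on basis elements by
\[
\varphi(w) = \pi_w \overline{\pi}_{w_0(S\setminus J)} \quad \text{for } w \in [u_I,v_J]_L,
\]
and then show that $\varphi$ is a bijection on bases and an $H_W(0)$-module homomorphism. By the cited result of Bj\"orner--Wachs that $\mathcal{D}_I^J$ is a left weak order interval, combined with the fact that $u_I = w_0(I)$ is the shortest element of $\mathcal{D}_I \subseteq \mathcal{D}_I^J$ and $v_J$ is the longest element of $\mathcal{D}_J \subseteq \mathcal{D}_I^J$, we have $[u_I,v_J]_L = \mathcal{D}_I^J$ as sets. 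This gives an immediate bijection between the basis $[u_I,v_J]_L$ of $\B(u_I,v_J)$ and the basis \eqref{eq:projbasis} of $\mathcal{P}_I^J$, so $\varphi$ is a $\K$-linear isomorphism.

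The substantive step is verifying that $\varphi$ intertwines the actions \eqref{eq:bruhataction} and \eqref{eq:action5} for every $s \in S$ and $w \in [u_I,v_J]_L$. The cases when $s \in \D_L(w)$ are trivial, and the zero cases will follow from the nontrivial case, so it suffices to match the ``non-descent with image in the module'' conditions. Concretely, I need to show that for $w \in \mathcal{D}_I^J$ with $s \notin \D_L(w)$, the condition $sw \in [u_I,v_J]_L$ is equivalent to $\D_R(sw) \subseteq J$.

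The key observation driving this equivalence is the following elementary fact about left weak order: if $u \leq_L v$ then $\D_R(u) \subseteq \D_R(v)$. Indeed, writing $v = s_1\cdots s_k u$ with $\ell(v) = \ell(u) + k$ and taking a reduced word for $u$ ending in $s$ (for $s \in \D_R(u)$), concatenation yields a reduced word for $v$ ending in $s$. Applying this to $w \leq_L sw$ (since $s \notin \D_L(w)$ forces $\ell(sw) = \ell(w)+1$) gives $I \subseteq \D_R(w) \subseteq \D_R(sw)$. Hence $sw \in \mathcal{D}_I^J$ if and only if $\D_R(sw) \subseteq J$, which, using the identification $[u_I,v_J]_L = \mathcal{D}_I^J$, is precisely the required matching of conditions. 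This monotonicity of right descents under left weak order is the main (but minor) obstacle; once established, comparing \eqref{eq:bruhataction} with \eqref{eq:action5} case by case shows $\varphi \circ \pi_s = \pi_s \circ \varphi$, completing the proof.
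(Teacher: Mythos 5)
Your proposal is correct and follows essentially the same route as the paper: identify $[u_I,v_J]_L$ with $\mathcal{D}_I^J$ via the Bj\"orner--Wachs interval result and then match the action \eqref{eq:bruhataction} against \eqref{eq:action5} case by case under the map $w \mapsto \pi_w\overline{\pi}_{w_0(S\setminus J)}$. The only difference is that you explicitly justify the key equivalence ($sw \in \mathcal{D}_I^J$ iff $\D_R(sw)\subseteq J$) via the monotonicity of right descent sets under $\leq_L$, a detail the paper asserts without proof.
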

\begin{proof}
The weak Bruhat interval $[u_I,v_J]_L$ is precisely $\mathcal{D}_I^J$, and, for any $w \in \mathcal{D}_I^J$ and $s \notin \D_L(w)$, $\D_R(sw) \nsubseteq J$ is equivalent to $sw \notin \mathcal{D}_I^J$. Hence the action \eqref{eq:bruhataction} on $\B(u_I,v_J)$ is the action \eqref{eq:action5} on $\mathcal{P}_I^J$ when identifying $w$ with $\pi_w \overline{\pi}_{w_0(S \setminus J)}$. 
\end{proof}

To emphasise their nature as (direct sums of) projective indecomposable $H_W(0)$-modules, for the remainder of the paper we denote the weak Bruhat interval module $\B(u_I, v_J)$ by $\PP_I^J$, and we denote the projective indecomposable weak Bruhat interval module $\B(u_I, v_I)$ by $\PP_I$.

 \begin{example} Consider the $H_{\sym_4}(0)$-module $\B(2134, 4231)$, and let $i$ denote $s_i$. Since $2134 = u_{\{1\}}$ and $4231 = v_{\{1,3\}}$, by Theorem \ref{thm:modified} we have $\mathcal{P}_{\{1\}}^{\{1,3\}} \cong \B(2134, 4231) = \PP_{\{1\}}^{\{1,3\}} \cong \PP_{\{1\}} \oplus \PP_{\{1,3\}}$. The basis elements for $\PP_{\{1\}}^{\{1,3\}}$ are depicted in Figure~\ref{fig:doubledescent} with those belonging to $\PP_{\{1\}}$ in orange and those belonging to $\PP_{\{1,3\}}$ in pink (cf. Figure~\ref{fig:S4}). 
 \end{example}

\begin{figure}[h]
\centering
\begin{tikzpicture}[thick, scale=0.65]
\tikzstyle{every node} = [draw, circle,scale = 0.3]
  
  \node (1324) at (0,1.8) {$$};
  \node (1423) at (-1.5,3.1) {$$};
  \node (2314) at (1.5,3.1) {$$};
  \node (1432) at (-3.8,4.4) {$$};
  \node (2413) at (0.75,4.4) {$$};
  \node (2431) at (-3,5.7) {$$};
  \node (3412) at (0,5.7) {$$};
  \node (3421) at (-1.5,7) {$$};

  \tikzstyle{every node} = [draw, circle,scale = 1]

  \draw[dashed, gray] (3421) -- (3421); 
  
  \draw[dashed, gray] (3412) -- (3421); 
  \draw[dashed, gray] (1423) -- (1432);   

  \draw [Orange] (1432) -- (2431) -- (3421);
  \draw [WildStrawberry] (1324) -- (2314) -- (2413);
  \draw [WildStrawberry] (1324) -- (1423);
  \draw [WildStrawberry] (1324) -- (1423);
  \draw [WildStrawberry] (1423) -- (2413) -- (3412);

 \draw (1432) node[draw=none,left, scale=0.8] {$4123$};
   
 \draw (2431) node[draw=none,above left, scale=0.8] {$3124$};
  
 \draw (3421) node[draw=none,above left, scale=0.8] {$2134$};
       
 \draw (1423) node[draw=none, below left, scale=0.8] {$4132$};
 
 \draw (2413) node[draw=none,right, scale=0.8] {$3142$};
 
 \draw (3412) node[draw=none,right, scale=0.8] {$2143$};
 
  \draw (1324) node[draw=none,left, scale=0.8] {$4231$};
  
  \draw (2314) node[draw=none,right, scale=0.8] {$3241$};
\end{tikzpicture}

\caption{The basis elements for $\B(2134, 4231) \cong \PP_{\{1\}} \oplus \PP_{\{1,3\}}$.} 
\label{fig:doubledescent}
\end{figure}
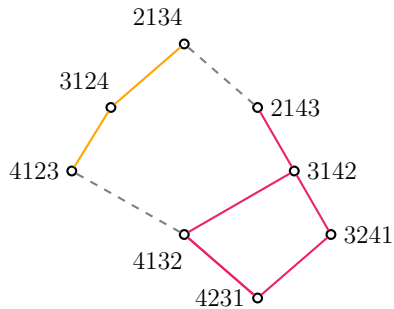

The \emph{socle} of a module $M$ is the sum of all simple submodules of $M$, that is, the largest semisimple submodule of $M$, and the \emph{top} of $M$ is the largest semisimple quotient of $M$.  
The following indecomposability criterion follows naturally from the algebraic structure of $H_W(0)$.

\begin{proposition}\label{prop:indecomposable}
Every submodule and quotient of $\PP_I$ is indecomposable. 
\end{proposition}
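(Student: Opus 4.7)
The plan is to exploit the algebraic structure of $H_W(0)$ rather than to argue combinatorially on the interval $[u_I, v_I]_L$. The module $\PP_I$ is projective indecomposable by Theorem~\ref{thm:modified} (and Theorem~\ref{thm:nortonresults}), so it has a simple top. But more is true: Fayers \cite{Fayers} showed that $H_W(0)$ is a Frobenius algebra, so every projective indecomposable $H_W(0)$-module is also injective indecomposable, and hence has a simple socle. Thus the key input for the argument is that $\PP_I$ has both a simple top and a simple socle.

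From here the proof is standard. First I would handle submodules: let $N$ be a nonzero submodule of $\PP_I$. Then $\operatorname{soc}(N) \subseteq \operatorname{soc}(\PP_I)$, and the latter is simple; since $H_W(0)$ is finite-dimensional we have $\operatorname{soc}(N) \neq 0$, so $\operatorname{soc}(N)$ is simple. A module with simple socle is indecomposable, because any decomposition $N = N_1 \oplus N_2$ with both summands nonzero would yield a socle $\operatorname{soc}(N_1) \oplus \operatorname{soc}(N_2)$ with two nonzero summands.

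Next I would treat quotients dually: let $Q = \PP_I / N$ be a nonzero quotient. Then $\operatorname{top}(Q)$ is a quotient of $\operatorname{top}(\PP_I)$, which is simple, and $\operatorname{top}(Q) \neq 0$ by Nakayama's lemma, so $\operatorname{top}(Q)$ is simple. Again a module with simple top is indecomposable, by the same sort of summand argument applied to tops.

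There is no real obstacle here; the whole argument rests on the observation that the Frobenius property of $H_W(0)$ forces $\PP_I$ to be simultaneously a projective cover and an injective hull of its simple quotient, which in turn forces simple socle and simple top. The only subtlety is to keep track of nonzero/zero submodules (the zero module being vacuously indecomposable under the usual convention), and to remember to cite \cite{Fayers} for the Frobenius property rather than trying to verify the indecomposability of arbitrary submodules directly from the action \eqref{eq:bruhataction}.
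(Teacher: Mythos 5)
Your proof is correct and follows essentially the same route as the paper's: both arguments hinge on the Frobenius (hence self-injective) property of $H_W(0)$ giving $\PP_I$ a simple socle, which handles submodules, and on the simple top of $\PP_I$ from Norton's classification, which handles quotients. The paper phrases the submodule step as ``any two nonzero submodules meet nontrivially'' and the quotient step via a surjection onto a sum of two simples, but these are just repackagings of your simple-socle/simple-top observations.
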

\begin{proof}
It was shown in \cite{Fayers} that $H_W(0)$ is a Frobenius algebra, therefore self-injective, hence the projective and injective $H_W(0)$-modules coincide.  
It then follows from \cite[Proposition 4.1(d)]{Auslander} that the socle of any projective indecomposable $H_W(0)$-module $\PP_I$ is simple. Thus every submodule of $\PP_I$ contains the socle of $\PP_I$, and so any two nonzero submodules of $\PP_I$ have nontrivial intersection. Therefore no submodule of $\PP_I$ can be a direct sum of nonzero submodules.

For quotients, if there is a surjection from $\PP_I$ to a direct sum of two nonzero modules, then, choosing a simple quotient of each summand, one has a surjection from $\PP_I$ to a direct sum of two simple modules. However, since the top of $\PP_I$ is simple (\cite{Norton}), this is a contradiction.
\end{proof}

The socle and the top of $\PP_I$ can be identified explicitly in terms of weak Bruhat interval modules: the socle of $\PP_I$ is $\B(v_I, v_I)$, whereas the top of $\PP_I$ is $\B(u_I, u_I)$.

\begin{lemma}\label{lem:quotientinterval} Let $u,v \in W$ and $Y \subseteq [u, v]_L$. Then $\K Y$ is a $H_W(0)$-submodule of $\B(u, v)$ if and only if $Y$ is an upper order ideal in the poset $[u, v]_L$. Moreover, if $w \in [u, v]_L$, then $[u, v]_L \setminus [u, w]_L$ is an upper order ideal in $[u, v]_L$.
\end{lemma}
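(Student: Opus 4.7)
The plan is to exploit that the $H_W(0)$-action on the basis of $\B(u,v)$ only moves elements upward in $\le_L$ or sends them to zero. Concretely, for any $w \in [u,v]_L$ and $s \in S$, the prescription \eqref{eq:bruhataction} yields $\pi_s w \in \{w, sw, 0\}$, and in the middle case $w \le_L sw$ since $s \notin \D_L(w)$ forces $\ell(sw) = \ell(w)+1$. So $\pi_s$ can only propagate a basis element to one strictly above it in $\le_L$, fix it, or kill it.

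For the ($\Leftarrow$) direction, suppose $Y$ is an upper order ideal of $[u,v]_L$. For $y \in Y$ and $s \in S$, if $\pi_s y = sy$ then $sy \in [u,v]_L$ and $y \le_L sy$, hence $sy \in Y$ by the upper order ideal property; otherwise $\pi_s y \in \{y, 0\} \subseteq \K Y$. Thus $\K Y$ is closed under each generator of $H_W(0)$ and is a submodule.

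For the ($\Rightarrow$) direction, assume $\K Y$ is a submodule, and let $y \in Y$ with $y \le_L z$ for some $z \in [u,v]_L$. Choose $s_{i_1}, \ldots, s_{i_k} \in S$ with $z = s_{i_1} \cdots s_{i_k} y$ and $\ell(z) = \ell(y) + k$. Setting $y_0 = y$ and $y_j = s_{i_{k-j+1}} \cdots s_{i_k} y$ for $1 \le j \le k$, the chain $y = y_0 \le_L y_1 \le_L \cdots \le_L y_k = z$ lies inside $[u,v]_L$ by transitivity of $\le_L$ (convexity of weak Bruhat intervals). At each step $s_{i_{k-j}} \notin \D_L(y_j)$ and $s_{i_{k-j}} y_j = y_{j+1} \in [u,v]_L$, so $\pi_{s_{i_{k-j}}} y_j = y_{j+1}$. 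Iterating gives $\pi_{s_{i_1}} \cdots \pi_{s_{i_k}} y = z$, and since $\K Y$ is a submodule containing $y$ and the basis elements $[u,v]_L$ are linearly independent, this forces $z \in Y$.

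For the moreover claim, if $y \in [u,v]_L \setminus [u,w]_L$ and $z \in [u,v]_L$ with $y \le_L z$, then $z \in [u,w]_L$ would force $y \le_L z \le_L w$ and place $y$ in $[u,w]_L$, a contradiction. I do not foresee any real obstacle: the argument is a direct unwinding of the definitions of $\le_L$ and \eqref{eq:bruhataction}, with the convexity of weak Bruhat intervals as the only technical ingredient.
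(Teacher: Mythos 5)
Your proof is correct and follows essentially the same route as the paper: the first equivalence is a direct unwinding of the action \eqref{eq:bruhataction} together with the fact that $\pi_s$ only moves basis elements upward in $\le_L$ (the paper simply declares this part immediate, while you supply the chain argument for the forward direction), and your argument for the ``moreover'' claim is the same transitivity contradiction the paper uses. No gaps.
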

\begin{proof} 
The first statement is immediate from the definition of $\B(u,v)$. For the second, suppose there exists some $x \in [u,  v]_L \setminus [u, w]_L$ and some $s\in S$ such that $\ell(sx)>\ell(x)$, $sx\in [u,v]_L$, but $sx \notin [u, v]_L \setminus [u, w]_L$. Then $sx \in [u, w]_L$, so $u \le_L x <_L sx \le_L w $. Therefore $x \in [u, w]_L$, contradicting the assumption that $x \in [u, v]_L \setminus [u, w]_L$. 
\end{proof}

The following corollary specialises Proposition~\ref{prop:indecomposable} to weak Bruhat interval modules.

\begin{proposition}\label{prop:indecomposablebruhat}
The weak Bruhat interval modules $\B(w, v_I)$ and $\B(u_I, w)$ are indecomposable for all $w \in \mathcal{D}_I$. Moreover, any submodule of $\B(w, v_I)$ and any quotient of $\B(u_I, w)$ is also indecomposable. 
\end{proposition}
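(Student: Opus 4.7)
The plan is to realise $\B(w, v_I)$ as a submodule of $\PP_I = \B(u_I, v_I)$ and $\B(u_I, w)$ as a quotient of $\PP_I$, then invoke Proposition~\ref{prop:indecomposable}, which already provides indecomposability of every submodule and every quotient of $\PP_I$.

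For the submodule, I would observe that $[w, v_I]_L$ is an upper order ideal of $[u_I, v_I]_L$: any element of $[u_I, v_I]_L$ that lies above some $x \geq_L w$ is itself $\geq_L w$. By Lemma~\ref{lem:quotientinterval}, $\K[w, v_I]_L$ is an $H_W(0)$-submodule of $\PP_I$. A direct comparison of \eqref{eq:bruhataction} for $\B(w, v_I)$ with the restricted action shows they agree; the key point is that for $x \in [w, v_I]_L$ and $s \notin \D_L(x)$ with $sx \in [u_I, v_I]_L$, one automatically has $sx \in [w, v_I]_L$, since $sx >_L x \geq_L w$. Hence every submodule of $\B(w, v_I)$ is also a submodule of $\PP_I$, and Proposition~\ref{prop:indecomposable} finishes the first part.

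For the quotient, Lemma~\ref{lem:quotientinterval} further gives that $N := \K([u_I, v_I]_L \setminus [u_I, w]_L)$ is a submodule of $\PP_I$. The quotient $\PP_I / N$ has basis indexed by $[u_I, w]_L$, and matching the three cases of \eqref{eq:bruhataction} for $\PP_I$ against those for $\B(u_I, w)$ shows $\PP_I / N \cong \B(u_I, w)$. Thus every quotient of $\B(u_I, w)$ is a quotient of $\PP_I$, and Proposition~\ref{prop:indecomposable} applies once more. The only substantive work is this case-by-case verification that the induced actions on $\K[w, v_I]_L$ and on $\PP_I / N$ reproduce the intrinsic actions from Definition~\ref{def:intervalmodule}; this is routine bookkeeping and poses no real obstacle.
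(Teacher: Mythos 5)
Your proposal is correct and follows essentially the same route as the paper: identify $\B(w,v_I)$ as a submodule and $\B(u_I,w)$ as a quotient of $\PP_I$ via Lemma~\ref{lem:quotientinterval}, then invoke Proposition~\ref{prop:indecomposable}. The paper's own proof is simply a terser version of this, leaving the verification that the induced actions match \eqref{eq:bruhataction} implicit, which you have correctly spelled out.
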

\begin{proof}
By Lemma~\ref{lem:quotientinterval}, we have that $\B(u_I, w)$ is a quotient of $\PP_I$. The statement then follows immediately from Proposition~\ref{prop:indecomposable}.
\end{proof}

Several families of $0$-Hecke modules associated to quasisymmetric functions are isomorphic to weak Bruhat interval modules that are either submodules or quotient modules of some $\PP_I$. Proposition~\ref{prop:indecomposablebruhat} will be applied in Section~\ref{sec:applications}.

\begin{remark} \label{rmk:notsimple}
Quotients of $\B(w, v_I)$ and submodules of $\B(u_I, w)$ are not indecomposable in general. For example, consider $W=\sym_4$ and the elements $2143 = u_{\{1,3\}}$ and $4132$ in $\mathcal{D}_{\{1,3\}}$. In the module $\B(2143, 4132)$, the submodules $\K\{3142 - 4132\}$ and $\K\{4132\}$ are both simple, hence the socle of $\B(2143, 4132)$ is decomposable.
\end{remark}

We now consider functors on the category $H_W(0)$-mod introduced in \cite{Fayers} and studied in terms of type A weak Bruhat interval modules in \cite{JKLO}. We will determine images of submodules and quotients of finite-type weak Bruhat interval modules under certain compositions of these functors. We largely follow the notation of \cite{JKLO}.

Let $w^{w_0}$ denote the conjugation $w_0ww_0$. Fayers \cite{Fayers} considers involutions $\upphi$, $\uptheta$ and an anti-involution $\upchi$ on $H_W(0)$ defined by
\begin{equation*}
\upphi : \pi_s \mapsto \pi_{s^{w_0}}, \hspace{10mm}  \uptheta : \pi_s \mapsto  1 - \pi_s, \hspace{10mm} \upchi : \pi_s \mapsto  \pi_s.
\end{equation*}

Given a $H_W(0)$-module $M$, Fayers \cite{Fayers} defines $H_W(0)$-modules $\upphi[M]$, $\uptheta[M]$ and $\upchi[M]$. For $\upphi[M]$ and $\uptheta[M]$, the underlying space is $M$, and the actions $\cdot_\upphi$ and $\cdot_\uptheta$ are defined by $\pi_s \cdot_\upphi m = \upphi(\pi_s)\cdot m$ and $\pi_s \cdot_\uptheta m = \uptheta(\pi_s)\cdot m$, for $m\in M$. For $\upchi[M]$, the underlying space is the dual space $M^*$ of $M$, and the action is given by $(\pi_s \cdot^\upchi f)(m) = f(\upchi(\pi_s)\cdot m)$, for $f\in M^*$ and $m\in M$. The functors $M\mapsto \upphi[M]$ and $M\mapsto \uptheta[M]$ are self-equivalences of $H_W(0)$-mod, and the functor $M\mapsto \upchi[M]$ is a dual equivalence of $H_W(0)$-mod. 

Fayers \cite{Fayers} determined the images of the simple $H_W(0)$-modules under these functors, and Huang \cite{Huang} determined the images of the projective indecomposable $H_W(0)$-modules under $\upphi$ and $\uptheta$. In type A, Jung, Kim, Lee and Oh \cite{JKLO} determined the images of weak Bruhat interval modules under $\upphi$, $\uptheta$ and $\upchi$ and their compositions;  those important for our purposes are the involution $\upphi$ and the anti-involutions $\hat{\uptheta} \coloneqq \uptheta \circ \upchi$ and $\hat{\upomega} \coloneqq \upphi \circ \uptheta \circ \upchi$. We now extend this result on $\upphi$, $\hat{\uptheta}$, and $\hat{\upomega}$ to arbitrary finite type, and moreover to quotients and submodules of weak Bruhat interval modules defined by upper order ideals in intervals in weak Bruhat order. For $Y\subseteq W$, let $w_0Yw_0$ denote the set $\{w_0yw_0 : y\in Y\}$, and similarly for $Yw_0$ and $w_0Y$.
 
 \begin{theorem}\label{thm:autotwists} 
 Let $Y$ be an upper order ideal in $[u, v]_L$. Then
\begin{align}
&\upphi[\B(u, v)/\K Y] \cong \K([u^{w_0}, v^{w_0}]_L\setminus w_0Yw_0), \label{eq:phi} \\  & \hspace{0.3mm} \hat{\uptheta} \hspace{0.4mm} [\B(u, v)/ \K Y] \cong  \K([vw_0, u{w_0}]_L\setminus Yw_0), \label{eq:theta} \\[0.1em]  
& \hat{\upomega}[\B(u,v)/\K Y] \hspace{-0.35mm} \cong  \K([w_0v, w_0u]_L\setminus w_0Y). \label{eq:omega}
\end{align}
 \end{theorem}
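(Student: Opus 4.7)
The plan is to prove each isomorphism by first treating the case $Y=\emptyset$, where $\B(u,v)/\K Y=\B(u,v)$ itself -- thereby extending to arbitrary finite type the type A result of \cite{JKLO} on images of weak Bruhat interval modules -- and then bootstrapping to arbitrary upper order ideals $Y$ using exactness of the covariant self-equivalences $\upphi,\uptheta$ and the contravariant dual equivalence $\upchi$. The three isomorphisms will be realised by three natural bijections of $W$: conjugation $w\mapsto w^{w_0}$ (length-preserving, hence an order-automorphism of $(W,\leq_L)$), right multiplication $w\mapsto ww_0$ (satisfying $\ell(ww_0)=\ell(w_0)-\ell(w)$ and $\D_L(ww_0)=S\setminus\D_L(w)$, hence an order-antiautomorphism), and left multiplication $w\mapsto w_0w=(ww_0)^{w_0}$.

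For (\ref{eq:phi}) in the base case, $w\mapsto w^{w_0}$ gives a $\K$-linear bijection $\B(u,v)\to \B(u^{w_0},v^{w_0})$; since $\upphi(\pi_s)=\pi_{s^{w_0}}$, the identities $s^{w_0}\in\D_L(w)\Leftrightarrow s\in \D_L(w^{w_0})$ and $(s^{w_0}w)^{w_0}=sw^{w_0}$ ensure it intertwines the $\upphi$-action with the standard Bruhat action. Applying the exact functor $\upphi$ to $0\to\K Y\to \B(u,v)\to \B(u,v)/\K Y\to 0$ and using this isomorphism identifies $\upphi[\K Y]$ with the submodule $\K(w_0Yw_0)$ of $\B(u^{w_0},v^{w_0})$ (well-defined by Lemma~\ref{lem:quotientinterval} since $w_0Yw_0$ is an upper order ideal), so the quotient is $\B(u^{w_0},v^{w_0})/\K(w_0Yw_0)$, with basis $[u^{w_0},v^{w_0}]_L\setminus w_0Yw_0$.

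For (\ref{eq:theta}) in the base case, on $\upchi[\B(u,v)]=\B(u,v)^*$ the $\hat{\uptheta}$-action is $(\pi_s\cdot f)(m)=-f(\overline{\pi}_sm)$; a case analysis on whether $s\in\D_L(w)$ and whether $sw\in [u,v]_L$ shows that $\pi_s\cdot w^*\in\{w^*,\,-(sw)^*,\,0\}$ in the three respective cases, which matches the standard Bruhat action on $ww_0$ via the sign-twisted map $w^*\mapsto (-1)^{\ell(w)}ww_0$ (the sign absorbs the minus sign introduced by $\hat{\uptheta}$). For general $Y$, the contravariant $\upchi$ turns the quotient sequence into the inclusion of $(\B(u,v)/\K Y)^*$ into $\B(u,v)^*$ as the annihilator of $\K Y$, which is spanned in the dual basis by $\{w^*:w\in [u,v]_L\setminus Y\}$; applying the covariant $\uptheta$ preserves this inclusion, and the base-case isomorphism identifies it with the submodule $\K([vw_0,uw_0]_L\setminus Yw_0)$ of $\B(vw_0,uw_0)$ (well-defined by Lemma~\ref{lem:quotientinterval} since $Yw_0$ is a lower order ideal in $[vw_0,uw_0]_L$, so its complement is an upper order ideal).

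Finally, (\ref{eq:omega}) follows from $\hat{\upomega}=\upphi\circ\hat{\uptheta}$ by applying the $\upphi$-analysis to the submodule produced in (\ref{eq:theta}): $\upphi$ sends it to the submodule of $\upphi[\B(vw_0,uw_0)]\cong \B(w_0v,w_0u)$ with basis $\{(ww_0)^{w_0}:w\in [u,v]_L\setminus Y\}=\{w_0w:w\in [u,v]_L\setminus Y\}=[w_0v,w_0u]_L\setminus w_0Y$. I expect the main technical obstacle to be the sign bookkeeping in the $\hat{\uptheta}$ base case and the switch between upper and lower order ideals under the order-reversing bijection $w\mapsto ww_0$; once these conventions are pinned down, each intertwining check reduces to a short case analysis that directly parallels the definition of the Bruhat action.
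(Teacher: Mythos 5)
Your proposal is correct, and its core is the same as the paper's: an explicit sign-twisted bijection together with a case-by-case check that it intertwines the twisted action with the standard Bruhat action, using the facts that $w\mapsto w^{w_0}$ is an order-automorphism of $(W,\leq_L)$ and $w\mapsto ww_0$, $w\mapsto w_0w$ are order-antiautomorphisms. The organization differs, though. The paper works in one shot on the quotient: it writes down the $\hat{\upomega}$-action on the dual basis $\{w^* : w\in[u,v]_L\setminus Y\}$ of $\B(u,v)/\K Y$ and exhibits the intertwiner $w^*\mapsto(-1)^{\ell(ww_0u^{-1})}w_0w$ directly, proving only \eqref{eq:omega} and declaring \eqref{eq:phi} and \eqref{eq:theta} similar. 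You instead prove the $Y=\emptyset$ case for $\upphi$ and $\hat{\uptheta}$, propagate to general $Y$ by exactness of the (anti-)equivalences (for $\upchi$ identifying $(\B(u,v)/\K Y)^*$ with the annihilator of $\K Y$), and obtain \eqref{eq:omega} by the factorisation $\hat{\upomega}[M]\cong\upphi[\hat{\uptheta}[M]]$, which one checks does hold with the paper's conventions since $\hat{\upomega}(\pi_s)=1-\pi_{s^{w_0}}$. Your route is slightly longer but makes explicit that the image of a quotient is realised as a genuine submodule spanned by an upper order ideal (via Lemma~\ref{lem:quotientinterval}), which is exactly the form in which the theorem is used later for injective hulls; the paper's route is more compact but leaves the ``similar'' cases and the submodule identification implicit. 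Your sign $(-1)^{\ell(w)}$ in the $\hat{\uptheta}$ case differs from the paper's normalisation only by a global constant, which is immaterial.
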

\begin{proof}  We only prove \eqref{eq:omega}, since the proofs of \eqref{eq:phi} and \eqref{eq:theta} are similar. Let $Y$ be an upper order ideal in $[u,v]_L$. For any $w$ in the basis $[u, v]_L \setminus Y$ of the quotient module $\B(u,v)/\K Y$, let $w^*$ denote the dual of $w$ with respect to this basis.

The $H_W(0)$-action on $\hat{\upomega}[\B(u,v)/\K Y]$ is given by
 \begin{align}
\pi_{s} \cdot^{\hat{\upomega}} w^* = 
\begin{cases}
	w^* & \text{if $s^{w_0} \notin \D_L(w)$}, \\
	-(s^{w_0}w)^* & \text{if $s^{w_0} \in \D_L(w)$ and $s^{w_0} w \in [u,v]_L \setminus Y$}, \\
 0 &  \text{if $s^{w_0} \in \D_L(w)$ and $s^{w_0} w \notin [u,v]_L \setminus Y$},
\end{cases} \label{eq:twistaction}
\end{align}
for all $w \in [u,v]_L\setminus Y$ and $s \in S$. The map $f : \hat{\upomega}[\B(u, v)/\K Y] \to \  \K[w_0 v, w_0 u]_L \setminus w_0 Y$ defined by $f(w^*) = (-1)^{\ell(ww_0u^{-1})}w_0w$ is a bijection. 
    To show $f$ is an isomorphism, we compute
\begin{align*}
f(\pi_{s} \cdot ^{\hat{\upomega}} w^*) = 
\begin{cases}
	(-1)^{\ell(ww_0u^{-1})}w_0w & \text{if $s^{w_0} \notin \D_L(w)$}, \\
	(-1)^{\ell(s^{w_0}ww_0u^{-1})+1}sw_0w & \text{if $s^{w_0} \in \D_L(w)$ and $s^{w_0}w \in [u,v]_L \setminus Y$}, \\
 0 &  \text{if $s^{w_0} \in \D_L(w)$ and $s^{w_0} w \notin [u,v]_L \setminus Y$,}
\end{cases} 
\end{align*}
and 
\begin{align*}
\pi_{s} f(w^*) = 
\begin{cases}
	(-1)^{\ell(ww_0u^{-1})}w_0w & \text{if $s \in \D_L(w_0w)$}, \\
	(-1)^{\ell(ww_0u^{-1})}sw_0w & \text{if $s \notin \D_L(w_0w)$ and $sw_0w  \in [w_0v,w_0u]_L \setminus w_0Y$}, \\
 0 &  \text{if $s \notin \D_L(w_0 w)$ and $sw_0w \notin [w_0v, w_0 u]_L \setminus w_0Y$.}
\end{cases} 
\end{align*}

Then $f(\pi_s \cdot^{\hat{\upomega}} w^*) = \pi_s  f(w^*)$ follows from that fact that $s \in \D_L(w_0w)$ if and only if $s^{w_0} \notin \D_L(w)$, that $s^{w_0} w \in [u,v]_L \setminus Y$ if and only if $sw_0w \notin [w_0v, w_0u]_L \setminus w_0 Y$, and that  $s^{w_0}\in \D_L(w)$ implies $\ell(s^{w_0}ww_0u^{-1}) = \ell(ww_0u^{-1}) - 1$. 
\end{proof}

\begin{remark} 
Theorem~\ref{thm:autotwists} extends three of the cases in \cite[Table 3.1]{JKLO}. The remaining cases can be extended similarly. To do so requires introducing \emph{negative weak Bruhat interval modules} $\overline{\B}(u,v)$ in arbitrary finite type, analogously to the type A definition given in \cite[Definition 3.1(2)]{JKLO}. Similarly to $\B(u,v)$, well-definedness of $\overline{\B}(u,v)$ in finite type follows from \cite{Defant.Searles}. 
\end{remark} 

The following lemma, due to \cite[Proposition 2.3.4]{Bjorner1} and \cite[Example 2.10]{Bjorner1}, summarises the effect of multiplication or conjugation by $w_0$ on the shortest and longest elements of right descent classes.

\begin{lemma}\label{lem:shortestlongest} Let $I \subseteq J\subseteq S$. Then 
\begin{enumerate}
\item $u_I^{w_0} = u_{w_0Iw_0}$ and $v_I^{w_0} = v_{w_0Iw_0}$,
\item $u_Iw_0 = v_{S \setminus w_0Iw_0}$ and $v_I{w_0} = u_{S\setminus w_0Iw_0}$,
\item $w_0 u_I = v_{S \setminus I}$ and ${w_0}  v_I= u_{S \setminus I}$.
\end{enumerate}
\end{lemma}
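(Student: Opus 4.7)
The plan is to reduce each of the three statements to standard length-and-descent formulas involving $w_0$, and then read off the conclusions from the fact that multiplication or conjugation by $w_0$ permutes right descent classes in a controlled way.

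First I would establish (or invoke as classical) the identities
\begin{align*}
\D_R(w^{w_0}) &= w_0\, \D_R(w)\, w_0, \\
\D_R(w_0 w) &= S \setminus \D_R(w), \\
\D_R(w w_0) &= w_0 (S \setminus \D_R(w)) w_0,
\end{align*}
together with the length identities $\ell(w^{w_0}) = \ell(w)$ and $\ell(w_0 w) = \ell(w w_0) = \ell(w_0) - \ell(w)$. The first descent identity is immediate, since conjugation by $w_0$ is an automorphism of $W$ that sends $S$ to $S$. The second follows from $\ell(w_0 w s) = \ell(w_0) - \ell(ws)$, which gives $s \in \D_R(w_0 w)$ exactly when $\ell(ws) > \ell(w)$, i.e.\ when $s \notin \D_R(w)$. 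The third can be obtained by rewriting $\D_R(w w_0) = \D_L(w_0 w^{-1})$ and applying the left-handed analogue of the second identity.

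With these in hand, each part of the lemma reduces to a short bijection argument. For (1), conjugation by $w_0$ is a length-preserving self-bijection of $W$ which, by the first identity, restricts to a bijection $\mathcal{D}_I \to \mathcal{D}_{w_0 I w_0}$; it therefore sends the shortest and longest elements of $\mathcal{D}_I$ to those of $\mathcal{D}_{w_0 I w_0}$, respectively. For (2), right multiplication by $w_0$ is a length-reversing bijection of $W$ which, by the third identity, restricts to a bijection $\mathcal{D}_I \to \mathcal{D}_{S \setminus w_0 I w_0}$, so it interchanges shortest and longest elements. For (3), left multiplication by $w_0$ is similarly a length-reversing bijection of $W$, restricting by the second identity to $\mathcal{D}_I \to \mathcal{D}_{S \setminus I}$, and so again interchanges shortest and longest.

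There is no real obstacle. The only slightly delicate point is the third descent identity, where complementation and conjugation both appear; but this is a routine consequence of the second identity applied to $w^{-1}$ together with $\D_R(w) = \D_L(w^{-1})$.
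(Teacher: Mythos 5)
Your argument is correct. Note, though, that the paper does not actually prove this lemma: it is stated as a citation to B\"orner--Brenti (Proposition 2.3.4 and Example 2.10 of \emph{Combinatorics of Coxeter Groups}), so there is no internal proof to compare against. Your self-contained derivation is the natural one and checks out: the three descent identities are right (for the first and third you are implicitly using that $x \mapsto w_0 x w_0$ is a length-preserving automorphism with $w_0 S w_0 = S$, together with $\ell(xw_0)=\ell(w_0x)=\ell(w_0)-\ell(x)$, and for part (2) you also need $w_0(S\setminus I)w_0 = S\setminus w_0Iw_0$, which follows from the same fact). The bijection step is legitimate because the paper has already established, just before the lemma, that each $\mathcal{D}_I$ is a left weak order interval $[u_I,v_I]_L$ and hence has a unique shortest and a unique longest element; a length-preserving (resp.\ length-reversing) bijection between two such classes must therefore match up (resp.\ swap) these distinguished elements. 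The only cosmetic remark is that the hypothesis $I\subseteq J\subseteq S$ in the statement is vacuous ($J$ does not appear in the conclusions), so you lose nothing by working with a single $I\subseteq S$ as you do.
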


Theorem~\ref{thm:autotwists} and Lemma~\ref{lem:shortestlongest} yield the following extension of \cite[Proposition 5.1]{Huang}, which will be applied in Sections~\ref{sec:projectivecovers} and \ref{sec:applications}.

\begin{corollary}\label{cor:PIJtwist} Let $I \subseteq J \subseteq S$. Then 
\begin{align*} \upphi[\PP_I^J] \cong \PP_{w_0I w_0}^{w_0J w_0} \,\, , \hspace{4mm} \hat{\uptheta}[\PP_I^J] \cong \PP^{S\setminus w_0I w_0}_{S\setminus w_0J w_0} \hspace{4mm} \text{and} \hspace{4mm} \hat{\upomega}[\PP_I^J] \cong \PP^{S\setminus I}_{S \setminus J}.
\end{align*} 
\end{corollary}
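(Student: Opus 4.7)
The plan is to apply Theorem~\ref{thm:autotwists} to the module $\PP_I^J$ by taking the trivial upper order ideal $Y = \emptyset$ in $[u_I, v_J]_L$, and then translate the resulting intervals into the $\PP_X^Y$ notation using Lemma~\ref{lem:shortestlongest}. Since by definition (and Theorem~\ref{thm:modified}) we have $\PP_I^J = \B(u_I, v_J)$, setting $Y = \emptyset$ in Theorem~\ref{thm:autotwists} immediately gives
\begin{align*}
\upphi[\PP_I^J] &\cong \K[u_I^{w_0}, v_J^{w_0}]_L, \\
\hat{\uptheta}[\PP_I^J] &\cong \K[v_J w_0, u_I w_0]_L, \\
\hat{\upomega}[\PP_I^J] &\cong \K[w_0 v_J, w_0 u_I]_L,
\end{align*}
with each right-hand side an honest weak Bruhat interval module.

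Next, I would apply the three parts of Lemma~\ref{lem:shortestlongest} in turn. Part (1) rewrites $u_I^{w_0} = u_{w_0 I w_0}$ and $v_J^{w_0} = v_{w_0 J w_0}$, identifying the first interval as $[u_{w_0 I w_0}, v_{w_0 J w_0}]_L$, which is $\PP_{w_0 I w_0}^{w_0 J w_0}$. Part (2) gives $v_J w_0 = u_{S \setminus w_0 J w_0}$ and $u_I w_0 = v_{S \setminus w_0 I w_0}$, yielding $\PP^{S \setminus w_0 I w_0}_{S \setminus w_0 J w_0}$. Part (3) gives $w_0 v_J = u_{S \setminus J}$ and $w_0 u_I = v_{S \setminus I}$, yielding $\PP^{S \setminus I}_{S \setminus J}$.

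The only point requiring verification is that the resulting indices satisfy the containment condition needed for the $\PP_X^Y$ notation to be well-defined. This is easy: conjugation by $w_0$ preserves inclusion, so $I \subseteq J$ implies $w_0 I w_0 \subseteq w_0 J w_0$, and complementation reverses inclusion, so $S \setminus w_0 J w_0 \subseteq S \setminus w_0 I w_0$ and $S \setminus J \subseteq S \setminus I$. There is no substantive obstacle here; the result is a direct corollary obtained by chaining Theorem~\ref{thm:autotwists} (with empty $Y$) and Lemma~\ref{lem:shortestlongest}, and the entire argument is bookkeeping in $W$.
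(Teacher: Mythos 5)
Your proposal is correct and is exactly the argument the paper intends: the paper derives the corollary by combining Theorem~\ref{thm:autotwists} (applied to $\PP_I^J=\B(u_I,v_J)$ with the empty ideal) with Lemma~\ref{lem:shortestlongest}, precisely as you do. The bookkeeping with the three parts of the lemma and the containment checks is carried out correctly.
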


\begin{example}
Let $W = \sym_4$ and $I = \{1\}$, and let $i$ denote $s_i$. Then $\PP_{S \setminus I} = \PP_{\{2,3\}}$, $\PP_{w_0 I w_0} = \PP_{\{3\}}$, and $\PP_{S\setminus w_0 I w_0} = \PP_{\{1,2\}}$. Figure~\ref{fig:PIJtwists} depicts how these four projective indecomposable modules are related via Corollary~\ref{cor:PIJtwist}.
\end{example}

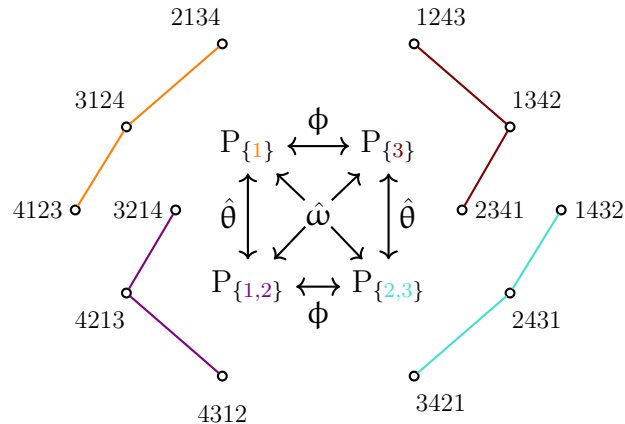
\begin{figure}[h]
\centering
\begin{tikzpicture}[thick, scale=0.85]

\node (D1) at (-1.1,5.4) {$\PP_{\{\textcolor{orange}{1}\}}$};
\node (D3) at (1.1,5.4) {$\PP_{\{\textcolor{Maroon}{3}\}}$};
\node (D12) at (-1.1,3.2) {$\PP_{\{\textcolor{Purple}{1,2}\}}$};
\node (D23) at (1.1,3.2) {$\PP_{\{\textcolor{Turquoise}{2,3}\}}$};

\node (psi) at (0,5.8) {$\upphi$};
\node (psi) at (0,2.75) {$\upphi$};
\node (rho) at (-1.4,4.35) {$\hat{\uptheta}$};
\node (rho) at (1.4,4.35) {$\hat{\uptheta}$};

\tikzstyle{every node} = [draw, circle,scale = 0.3]
  
  \node (1243) at (-1.5,1.8) {$$};
  \node (2134) at (1.5,1.8) {$$};
  
  \node (1342) at (-3,3.1) {$$};
   \node (3124) at (3,3.1) {$$};
  
  \node (1432) at (-3.8,4.4) {$$};
  \node (2341) at (-2.23,4.4) {$$};
  \node (3214) at (2.25,4.4) {$$};
  \node (4123) at (3.8,4.4) {$$};
  
  \node (2431) at (-3,5.7) {$$};
  \node (4213) at (3,5.7) {$$};
  
  \node (3421) at (-1.5,7) {$$};
  \node (4312) at (1.5,7) {$$};
  
  \tikzstyle{every node} = [draw, circle,scale = 1]

  \draw [Purple] (1243) -- (1342) -- (2341);
  \draw [orange] (1432) -- (2431) -- (3421);
  \draw [Maroon] (3214) -- (4213) -- (4312);
  \draw [Turquoise] (2134) -- (3124) -- (4123);
  
%%%%%%%%%%%%%%%%%%%%%%%%%%%%%%%
 
\draw (3421) node[draw=none,above left, scale=0.8] {$2134$};
\draw (4312) node[draw=none,above right, scale=0.8] {$1243$};

%%%%%%%%%%%%%%%%%%%%%%%%%%%%%%%%

\draw (2431) node[draw=none,above left, scale=0.8] {$3124$};
\draw (4213) node[draw=none,above right, scale=0.8] {$1342$};

%%%%%%%%%%%%%%%%%%%%%%%%%%%%%%%%
 
\draw (1432) node[draw=none,left, scale=0.8] {$4123$};
\draw (2341) node[draw=none,left, scale=0.8] {$3214$};
\draw (3214) node[draw=none,right, scale=0.8] {$2341$}; 
\draw (4123) node[draw=none,right, scale=0.8] {$1432$};

%%%%%%%%%%%%%%%%%%%%%%%%%%%%%%%%
 
\draw (1342) node[draw=none,below left, scale=0.8] {$4213$};
\draw (3124) node[draw=none,below right, scale=0.8] {$2431$};

%%%%%%%%%%%%%%%%%%%%%%%%%%%%%%%%

 \draw (1243) node[draw=none,below, scale=0.8] {$4312$};
 \draw (2134) node[draw=none, below right, scale=0.8] {$3421$};

%%%%%%%%%%%%%%%%%%%%%%%%%%%%%%%% 

\draw [<->] (D1) -- (D3);
\draw [<->] (D1) -- (D12);
\draw [<->] (D12) -- (D23);
\draw [<->] (D3) -- (D23);
\draw [->]  (0,4.35) -- (D1);
\draw [<-]  (D23) -- (0,4.35);
\draw [->]  (0,4.35) -- (D3);
\draw [->]  (0,4.35) -- (D12);

\node[circle,draw=white, fill=white, inner sep=0pt,minimum size=10pt] at (0,4.35) {$\hat{\upomega}$};
%\node (omega) at (0,4.35) {$\hat{\upomega}$};
 
\end{tikzpicture}
\caption{Applying $\upphi$, $\hat{\uptheta}$ and $\hat{\upomega}$ to projective indecomposable $H_{\sym_{4}}(0)$-modules.}
\label{fig:PIJtwists}
\end{figure}

%%%%%%%%%%%%%%%%%%%%%%%%%%%%%%%%%%%%%%%%%%%%%%%%%%%%%%%%%

\section{Projective covers and injective hulls}\label{sec:projectivecovers} 

In this section, we determine the projective covers and injective hulls for significant families of $H_W(0)$-modules. Specific applications will be given in Section~\ref{sec:applications}.

Since $H_W(0)$ is Frobenius \cite{Fayers}, this algebra is also Artinian. For Artinian algebras, a submodule $N$ of a module $M$ is \emph{superfluous} if $N$ is contained in the radical ${\rm rad}(M)$ of $M$ (\cite[Lemma 3.4]{Auslander}). For modules $M$ and $K$, an epimorphism $f : M \to K$ is \emph{essential} if $\ker(f)$ is a superfluous submodule of $M$. A \emph{projective cover} of $M$ is a projective module $P$ together with an essential epimorphism $f:P\rightarrow M$. The projective module $P$ is unique up to isomorphism, and we shall refer to $P$, rather than the pair $(P,f)$, as the projective cover of $M$. 

Let $Y$ be an upper order ideal in $\mathcal{D}_I^J$. By Lemma~\ref{lem:quotientinterval}, $\K Y$ is a submodule of $\PP_I^J$. The morphism $f : \PP_I^J \to \PP_I^J/\K Y$ defined by $f(w) = w + \K Y$ is an epimorphism with ${\rm ker}(f) = \K Y$. We will show that if $u_J \notin Y$, then $\PP_I^J$ is the projective cover of $\PP_I^J/\K Y$. 
In \cite[Section 5]{CKNO:projective}, Choi, Kim, Nam and Oh constructed projective covers for the $0$-Hecke modules introduced by Tewari and van Willigenburg in \cite{TvW:2}, in terms of \emph{generalised compositions}, using the ribbon tableau model of \cite{Huang}. Our approach, similarly to \cite{CKNO:projective}, involves directly establishing radical membership; we work with and state results in terms of right descent sets.
 
\begin{lemma}\label{lem:generatornotinJ} Let $I\subseteq J\subseteq S$, and let $Y$ be an upper order ideal in $\mathcal{D}_I^J$ such that $u_J \notin Y$. Let $y \in Y$, and let $s_1, \ldots, s_k \in S$ such that $y = \pi_{s_1} \cdots \pi_{s_k} u_I$ in $\PP_I^J$. Then at least one of $s_1, \ldots, s_k$ is not in $J$.
\end{lemma}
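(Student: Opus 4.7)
The plan is to argue by contradiction: suppose every $s_i$ lies in $J$, and show this forces $y \notin Y$, contradicting the hypothesis $y \in Y$. The central observation is that the parabolic subgroup $W_J$ is closed under left multiplication by elements of $J$, so applying $\pi_s$-operators with $s \in J$ cannot take us outside $W_J$.

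More precisely, I would proceed as follows. First note that since $I \subseteq J$, one has $u_I = w_0(I) \in W_I \subseteq W_J$. Argue by downward induction on $i$ that each partial product $\pi_{s_i} \pi_{s_{i+1}} \cdots \pi_{s_k} u_I$ is a nonzero element of $W_J$: the action formula \eqref{eq:action5} (equivalently \eqref{eq:bruhataction}) produces only three possible outputs — the element itself, its left multiple by $s_i$, or zero — and the zero case is ruled out because $y \neq 0$ (indeed $y \in Y \subseteq \mathcal{D}_I^J$ is a basis element), so no intermediate product may vanish. In the remaining two cases, $s_i \in J$ together with membership in $W_J$ preserves membership in $W_J$. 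This yields $y \in W_J$.

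Now since $w_0(J)$ is the maximum of $W_J$ in left weak Bruhat order (and this order on $W_J$ agrees with its restriction from $W$), we obtain $y \leq_L w_0(J) = u_J$. By Lemma~\ref{lem:quotientinterval} and the hypotheses, $Y$ is an upper order ideal in $\mathcal{D}_I^J$ that does not contain $u_J$, so its complement $\mathcal{D}_I^J \setminus Y$ is a lower order ideal containing $u_J$. Since $y \in \mathcal{D}_I^J$ and $y \leq_L u_J$, this forces $y \notin Y$, contradicting $y \in Y$.

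There is no significant obstacle here; the argument rests entirely on the closure of $W_J$ under left multiplication by its generators and the identification $u_J = w_0(J)$ recorded in Section~\ref{sec:bruhat}. The only point requiring minor care is justifying that intermediate products do not vanish, which follows from $y \neq 0$.
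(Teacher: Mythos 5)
Your argument is correct and is essentially the paper's own proof: both assume all $s_i\in J$, deduce $y\in W_J$ (hence $y\le_L w_0(J)=u_J$), and derive a contradiction with $Y$ being an upper order ideal avoiding $u_J$. Your extra care in ruling out vanishing of intermediate products and in tracking that each $\pi_{s_i}$ either fixes or left-multiplies is a slightly more explicit rendering of a step the paper states tersely, but it is not a different approach.
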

\begin{proof} 
First note that since $u_I\le_L y$, such a sequence $s_1, \ldots,s_k$ exists. Suppose for a contradiction that all of $s_1, \ldots , s_k$ are in $J$. Then $s_1 \cdots s_k\in W_J$, the parabolic subgroup of $W$ generated by $J$. Thus, since $u_I \in W_J$, we have $y = \pi_{s_1} \cdots \pi_{s_k} u_I \in W_J$. Therefore $y \le_L u_J$, and since $Y$ is an upper order ideal, we have  $u_J \in Y$. 
\end{proof}

\begin{theorem} \label{thm:projectivecover} Let $Y$ be an upper order ideal in $\mathcal{D}_I^J$ with $u_J \notin Y$. Then $\PP_I^J$ is the projective cover of $\PP_I^J/\K Y$.
\end{theorem}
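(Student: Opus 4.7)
The plan is to verify that the natural surjection $f : \PP_I^J \to \PP_I^J/\K Y$ is essential; since $\PP_I^J$ is projective (being a direct sum of projective indecomposables via \eqref{eq:altdecomp}), this will identify it as the projective cover. Equivalently, I need to show $\K Y \subseteq \rad(\PP_I^J)$, which reduces to showing each $y \in Y$ lies in $\rad(\PP_I^J)$.

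The first step is to identify $\rad(\PP_I^J)$ with the intersection of the kernels of the canonical projections $\phi_X : \PP_I^J \twoheadrightarrow \B(u_X, u_X)$ onto the simple tops of the indecomposable summands $\PP_X$, as $X$ ranges over subsets with $I \subseteq X \subseteq J$. Each such top is one-dimensional, and since $u_X = w_0(X)$ is the longest element of the parabolic subgroup $W_X$ — in particular an involution — we have $\D_L(u_X) = \D_R(u_X) = X$. Hence by \eqref{eq:bruhataction}, $\pi_s$ acts on $\B(u_X, u_X)$ as the identity when $s \in X$ and as zero otherwise.

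For each $y \in Y$, the plan is then to invoke Lemma \ref{lem:generatornotinJ} to choose a sequence $s_1, \ldots, s_k \in S$ with $y = \pi_{s_1} \cdots \pi_{s_k} u_I$ in $\PP_I^J$ and some $s_i \notin J$. For any $X$ with $I \subseteq X \subseteq J$ we then have $s_i \notin X$, so $\pi_{s_i}$ acts as zero on $\B(u_X, u_X)$; applying $\phi_X$ to the above expression produces $\phi_X(y) = \pi_{s_1} \cdots \pi_{s_k}\,\phi_X(u_I) = 0$, so $y \in \rad(\PP_I^J)$. The main conceptual step is the first one — rephrasing radical membership in terms of vanishing of the maps $\phi_X$ — and once that is in place, Lemma \ref{lem:generatornotinJ} is precisely the tool needed to force a zero factor in the scalar action on each $\B(u_X, u_X)$, reducing the remainder of the argument to a one-line computation.
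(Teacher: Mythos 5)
Your proposal is correct, and its skeleton matches the paper's: reduce essentiality of the natural surjection to the containment $\K Y \subseteq \rad(\PP_I^J)$, and then use Lemma~\ref{lem:generatornotinJ} to extract a generator $s_i \notin J$ from any expression $y = \pi_{s_1}\cdots\pi_{s_k}u_I$. Where you diverge is in the final step. The paper fixes an explicit isomorphism $h : \PP_I^J \to \oplus_{I\subseteq X\subseteq J}\PP_X$, expands $h(y)$ in the basis of right descent classes, and argues that no $u_X$ can appear in the result — using both that $\pi_{s_1}\cdots\pi_{s_k}u_X \neq u_X$ and that no other element of $\PP_X$ can be sent to $u_X$ (since $u_X$ is shortest) — before invoking $\rad(\PP_X) = \K([u_X,v_X]_L\setminus\{u_X\})$. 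You instead characterise $\rad(\PP_I^J)$ as the intersection of the kernels of the projections onto the simple tops $\B(u_X,u_X)$, on each of which $\pi_s$ acts as the scalar $1$ or $0$ according to whether $s \in X$; the factor $\pi_{s_i}$ with $s_i \notin J \supseteq X$ then kills $\phi_X(y)$ outright. Your version buys a cleaner endgame: it sidesteps the paper's need to rule out cancellation reintroducing $u_X$, at the cost of first justifying (routinely, since $\rad$ of a direct sum is the direct sum of radicals and each $\PP_X$ has a unique maximal submodule) that radical membership is detected by the maps $\phi_X$. Both arguments rest on the same combinatorial input, namely Lemma~\ref{lem:generatornotinJ}.
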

\begin{proof} Since $\K Y$ is the kernel of the epimorphism $f: \PP_I^J \to \PP_I^J/\K Y$, it is sufficient to show $\K Y \subseteq \text{rad}(\PP_{I}^{J})$. 
Let $h$ be an isomorphism between $\PP_I^J$ and $\oplus_{I \subseteq X \subseteq J} \PP_X$. Then 
\begin{align*}
h(u_I) = \sum_{I \subseteq X \subseteq J} \sum_{w \in \mathcal{D}_X}a_w w 
\end{align*}
for some coefficients $a_w\in \K$. Let $y \in Y$. Since $u_I$ generates $\PP_I^J$,  there exist $s_1, \ldots , s_k \in S$ such that $y = \pi_{s_1} \cdots \pi_{s_k} u_I$. It follows that 
\begin{align}
h(y) = \pi_{s_1} \cdots \pi_{s_k} h(u_I) = \pi_{s_1} \cdots \pi_{s_k} \left(  \sum_{I \subseteq X \subseteq J} \sum_{w \in \mathcal{D}_X}a_w w  \right).\label{eq:imageofy}
\end{align}

By Lemma~\ref{lem:generatornotinJ} at least one of $s_1, \ldots , s_k$ is not in $J = \D_R(u_J)$, say $s_i$, and thus $s_i\notin \D_R(u_X)$ for all $I\subseteq X\subseteq J$. Since $\D_R(u_X) = \D_L(u_X)$, we have $s_i\notin \D_L(u_X)$. 
Therefore $\pi_{s_1}\cdots \pi_{s_k}u_X \neq u_X$ for all $I \subseteq X \subseteq J$. Moreover, since $u_X$ is the shortest element in $\PP_X$, there is no element $x\neq u_X$ in $\PP_X$ such that $\pi_{s_1}\cdots \pi_{s_k}x = u_X$ in $\PP_X$. 
Thus from \eqref{eq:imageofy} we obtain
\begin{align*}
h(y) =  \sum_{I \subseteq X \subseteq J} \sum_{w \in \mathcal{D}_X}a_w \pi_{s_1} \cdots \pi_{s_k}w  = \sum_{I \subseteq X \subseteq J} \sum_{w \in \mathcal{D}_X \setminus \{u_X\} }\hat{a}_w w, 
\end{align*}
for some coefficients $\hat{a}_w \in \K$.
Since projective indecomposable modules have precisely one maximal submodule (\cite[Lemma 6.1]{Leinster}), it is immediate that ${\rm rad}(\PP_X) = \K([u_X, v_X]_L\setminus \{u_X\})$. Thus $h(y) \in \text{rad}(\oplus_{I \subseteq X \subseteq J}\PP_X)$. Hence $y \in \text{rad}(\PP_I^J)$, and so $\K Y \subseteq \text{rad}(\PP_I^J)$. 
\end{proof}

This result specialises to weak Bruhat interval modules as follows.

\begin{corollary} \label{cor:cover} Let $I \subseteq J$ and $w \in \mathcal{D}_J$. Then $\PP_I^J$ is the projective cover of $\B(u_I, w)$. 
\end{corollary}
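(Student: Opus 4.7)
The plan is to realise $\B(u_I, w)$ as a quotient $\PP_I^J / \K Y$ for an upper order ideal $Y$ in $\mathcal{D}_I^J$ with $u_J \notin Y$, and then invoke Theorem~\ref{thm:projectivecover}.

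First I would set $Y = [u_I, v_J]_L \setminus [u_I, w]_L$. Since $w \in \mathcal{D}_J$, we have $w \le_L v_J$, so $[u_I, w]_L \subseteq [u_I, v_J]_L = \mathcal{D}_I^J$, and Lemma~\ref{lem:quotientinterval} (applied with $u = u_I$ and $v = v_J$) says $Y$ is an upper order ideal in $\mathcal{D}_I^J$. Comparing the defining action \eqref{eq:bruhataction} of $\B(u_I, v_J) = \PP_I^J$ with that of $\B(u_I, w)$, the quotient $\PP_I^J / \K Y$ has basis $[u_I, w]_L$, and the induced action on this basis sends a basis element $x$ to $x$ when $s \in \D_L(x)$, to $sx$ when $sx \in [u_I, w]_L$, and to $0$ when $sx \notin [u_I, w]_L$ (covering both the case $sx \in Y$ and the case $sx \notin [u_I, v_J]_L$). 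This is precisely the action defining $\B(u_I, w)$, so $\PP_I^J / \K Y \cong \B(u_I, w)$ via the natural map $x \mapsto x + \K Y$.

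Next I would verify that $u_J \notin Y$, which amounts to showing $u_J \in [u_I, w]_L$. Since $u_J$ is the shortest element of $\mathcal{D}_J$ and $w \in \mathcal{D}_J$, we have $u_J \le_L w$. For $u_I \le_L u_J$, the key observation is that $u_I = w_0(I)$ is the longest element of the parabolic subgroup $W_I$, and since $I \subseteq J$ we have $W_I \subseteq W_J$, so $u_I$ lies in $W_J$, where $u_J = w_0(J)$ is the longest element; the weak Bruhat order on $W_J$ is the restriction of that on $W$, so $u_I \le_L u_J$. Combining these gives $u_J \in [u_I, w]_L$, hence $u_J \notin Y$.

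With these two facts in hand, Theorem~\ref{thm:projectivecover} applies directly and yields that $\PP_I^J$ is the projective cover of $\PP_I^J / \K Y \cong \B(u_I, w)$. There is no substantive obstacle here: the proof is essentially a bookkeeping exercise matching up the actions and checking the order-theoretic hypothesis $u_J \notin Y$. The only mildly subtle point is the Coxeter-theoretic fact that $I \subseteq J$ implies $u_I \le_L u_J$ in weak Bruhat order, but this is a standard property of parabolic subgroups.
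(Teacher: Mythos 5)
Your proposal is correct and follows essentially the same route as the paper: take $Y = \mathcal{D}_I^J \setminus [u_I,w]_L$, apply Lemma~\ref{lem:quotientinterval} to see it is an upper order ideal with quotient $\B(u_I,w)$, check $u_J \in [u_I,w]_L$ via $u_I \le_L u_J \le_L w$, and invoke Theorem~\ref{thm:projectivecover}. The extra detail you supply (matching the quotient action with that of $\B(u_I,w)$, and justifying $u_I \le_L u_J$ via the parabolic subgroup $W_J$) is correct and simply fills in steps the paper leaves implicit.
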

\begin{proof}
Let $Y$ denote the set $\mathcal{D}_I^J \setminus [u_I, w]_L$. Then $Y$ is an upper order ideal in $\mathcal{D}_I^J$ by Lemma~\ref{lem:quotientinterval}, and $\PP_I^J/\K Y \cong \B(u_I, w)$. 
Since $I\subseteq J$ we have $u_I\le_L u_J$, and since $w\in \mathcal{D}_J$ we have $u_J \le_L w$. Hence $u_J\in [u_I, w]$, so $u_J\notin Y$. 
Therefore, by Theorem~\ref{thm:projectivecover}, $\PP_I^J$ is the projective cover of $\B(u_I, w)$. 
\end{proof}

Thus for $w\in \mathcal{D}_J$, by \eqref{eq:altdecomp} the projective cover of $\B(u_I, w)$ is indecomposable if and only if $I = J$. 

\begin{remark}
The type A case of Corollary~\ref{cor:cover} has been obtained independently, in the language of generalised compositions, by Kim, Lee and Oh in \cite[Lemma 5.2]{Kim.Lee.Oh}.
\end{remark}

\begin{example} Consider the $H_{\sym_4}(0)$-module $\B(2134, 4132)$, and let $i$ denote $s_i$. Since $2134 = u_{{\{1\}}}$ and $4132 \in \mathcal{D}_{\{1,3\}}$, by Corollary~\ref{cor:cover} we have that $\PP_{\{1\}}^{\{1,3\}}$ is the projective cover of $\B(2134, 4132)$. The projective module $\PP_{\{1\}}^{\{1,3\}}$ is depicted in Figure~\ref{fig:doubledescent}; note the appearance of the interval $[2134, 4132]_L$ in this figure.   
\end{example}

We now use Theorem~\ref{thm:autotwists} to determine the injective hulls of another significant class of $H_W(0)$-modules. 
A proper submodule $N$ of a $H_W(0)$-module $M$ is an \emph{essential submodule} of $M$ if $H \cap N \neq \{0\}$ for all non-zero submodules $H$ of $M$. An \emph{injective hull} of $M$ is an injective module $Q$ together with a monomorphism $g:M\rightarrow Q$ such that the image of $g$ is an essential submodule of $Q$. The injective module $Q$ is unique up to isomorphism, and we shall refer to $Q$, rather than the pair $(Q,g)$, as the injective hull of $M$.

Since $M \mapsto \hat{\upomega}[M]$ is a dual equivalence of categories,  $Q$ is the injective hull (projective cover) of $M$ if and only if ${\hat{\upomega}}[Q]$ is the projective cover (injective hull) of $\hat{\upomega}[M]$. The analogous statement holds for $M\mapsto \hat{\uptheta}[M]$.

\begin{theorem}\label{thm:injectivehull}
Let $Y$ be an upper order ideal in $\mathcal{D}_I^J$ with $v_I \in Y$. Then $\PP_I^J$ is the injective hull of $\K Y$.
\end{theorem}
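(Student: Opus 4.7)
The plan is to deduce the theorem from Theorem~\ref{thm:projectivecover} by applying the contravariant dual equivalence $\hat{\upomega}$. Applying $\hat{\upomega}$ to the short exact sequence $0 \to \K Y \to \PP_I^J \to \PP_I^J/\K Y \to 0$ reverses arrows and yields a short exact sequence $0 \to \hat{\upomega}[\PP_I^J/\K Y] \to \hat{\upomega}[\PP_I^J] \to \hat{\upomega}[\K Y] \to 0$, so $\hat{\upomega}[\K Y]$ is exhibited as a quotient of $\hat{\upomega}[\PP_I^J]$. I would first identify the two outer terms: by Corollary~\ref{cor:PIJtwist}, $\hat{\upomega}[\PP_I^J] \cong \PP_{S\setminus J}^{S\setminus I}$, and by \eqref{eq:omega} together with Lemma~\ref{lem:shortestlongest}, $\hat{\upomega}[\PP_I^J/\K Y] \cong \K(\mathcal{D}_{S\setminus J}^{S\setminus I}\setminus w_0Y)$. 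Writing $Y' = \mathcal{D}_{S\setminus J}^{S\setminus I}\setminus w_0 Y$, this identifies $\hat{\upomega}[\K Y]$ with the quotient $\PP_{S\setminus J}^{S\setminus I}/\K Y'$.

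The next step is to verify the hypotheses of Theorem~\ref{thm:projectivecover} for $Y'$ inside $\mathcal{D}_{S\setminus J}^{S\setminus I}$. I would invoke the standard fact that left multiplication by $w_0$ is an order-reversing bijection on $(W,\le_L)$: combined with Lemma~\ref{lem:shortestlongest}(3), this shows that $w_0Y$ is a lower order ideal in $\mathcal{D}_{S\setminus J}^{S\setminus I}$, so its complement $Y'$ is an upper order ideal. Moreover, Lemma~\ref{lem:shortestlongest}(3) gives $u_{S\setminus I} = w_0 v_I$, and the hypothesis $v_I\in Y$ forces $u_{S\setminus I}\in w_0Y$, hence $u_{S\setminus I}\notin Y'$.

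Therefore Theorem~\ref{thm:projectivecover} applies, and $\PP_{S\setminus J}^{S\setminus I}$ is the projective cover of $\hat{\upomega}[\K Y]$. Applying $\hat{\upomega}$ once more converts projective covers into injective hulls, and by Corollary~\ref{cor:PIJtwist} we have $\hat{\upomega}[\PP_{S\setminus J}^{S\setminus I}]\cong \PP_I^J$, yielding the theorem. The main step requiring care will be the translation of the upper order ideal $Y$ and its distinguished element $v_I$ through the twist $\hat{\upomega}$ to the corresponding data $(Y', u_{S\setminus I})$ needed in Theorem~\ref{thm:projectivecover}; once the order-reversal of left multiplication by $w_0$ is isolated, the remaining verifications are routine.
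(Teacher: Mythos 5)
Your proposal is correct and is essentially the paper's own argument: your $Y' = \mathcal{D}_{S\setminus J}^{S\setminus I}\setminus w_0Y$ is exactly the set $Z$ the paper introduces, and both proofs verify the two hypotheses of Theorem~\ref{thm:projectivecover} for it ($Y'$ an upper order ideal via the order-reversal of left multiplication by $w_0$, and $u_{S\setminus I}\notin Y'$ from $v_I\in Y$ and Lemma~\ref{lem:shortestlongest}(3)) before transporting back through $\hat{\upomega}$ and Corollary~\ref{cor:PIJtwist}. The only cosmetic difference is the direction of bookkeeping: the paper starts from $Z$ and shows $\hat{\upomega}[\PP_{S\setminus J}^{S\setminus I}/\K Z]\cong \K Y$ directly from Theorem~\ref{thm:autotwists}, whereas you dualise the short exact sequence to identify $\hat{\upomega}[\K Y]$ with that quotient; these are equivalent.
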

\begin{proof}
Let $Z = \mathcal{D}_{S\setminus J}^{S \setminus I} \setminus w_0 Y$. Then $Z$ is an upper order ideal in $\mathcal{D}_{S\setminus J}^{S \setminus I}$ and $\mathcal{D}_I^J \setminus w_0 Z = Y$. Hence $\hat{\upomega}[\PP_{S \setminus J}^{S\setminus I} / \K Z] \cong \K Y$ by Theorem~\ref{thm:autotwists}.  
Since $v_I \in Y$, we have $u_{S\setminus I} \notin Z$, and so $\PP_{S \setminus J}^{S\setminus I}$ is the projective cover of $\PP_{S \setminus J}^{S\setminus I} / \K Z$ by Theorem \ref{thm:projectivecover}. 
By Corollary~\ref{cor:PIJtwist}, $\hat{\upomega}[\PP_{S\setminus J}^{S \setminus I}] \cong \PP_I^J$, and therefore $\PP_I^J$ is the injective hull of $\K Y$.
\end{proof}
Note that the monomorphism $g : \K Y \to \PP_I^J$ associated to the injective hull of $\K Y$ is the inclusion map. The specialisation of Theorem~\ref{thm:injectivehull} to weak Bruhat interval modules is as follows.

\begin{corollary} \label{cor:hull} 
Let $I\subseteq J$ and $w \in \mathcal{D}_I$. Then $\PP_I^J$ is the injective hull of $\B(w, v_J)$. 
\end{corollary}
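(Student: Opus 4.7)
The plan is to dualize the argument of Corollary~\ref{cor:cover}: exhibit $\B(w,v_J)$ as $\K Y$ for an upper order ideal $Y$ of $\mathcal{D}_I^J$ with $v_I \in Y$, and then invoke Theorem~\ref{thm:injectivehull}. The natural candidate is $Y \coloneqq [w,v_J]_L$.

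First I would verify the containment $Y \subseteq \mathcal{D}_I^J = [u_I,v_J]_L$. This is immediate: since $w \in \mathcal{D}_I$ we have $u_I \le_L w$, so any $x \in [w,v_J]_L$ satisfies $u_I \le_L w \le_L x \le_L v_J$. That $Y$ is an upper order ideal in $\mathcal{D}_I^J$ then follows directly from the definition of an interval: if $x \in Y$ and $y \in \mathcal{D}_I^J$ with $x \le_L y$, then $w \le_L x \le_L y \le_L v_J$, so $y \in Y$.

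Next I would identify the submodule $\K Y \subseteq \PP_I^J$ (a submodule by Lemma~\ref{lem:quotientinterval}) with $\B(w,v_J)$ as an $H_W(0)$-module by comparing the $\pi_s$-action on a basis element $x \in Y$. The fixed-point cases $s \in \D_L(x)$ agree trivially. When $s \notin \D_L(x)$, the upper order ideal property ensures that whenever $sx \in \mathcal{D}_I^J$ we also have $sx \in Y = [w,v_J]_L$, so both actions return $sx$; and if $sx \notin \mathcal{D}_I^J$ then certainly $sx \notin [w,v_J]_L$, so both actions return $0$.

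Finally, $v_I \in Y$: indeed $w \le_L v_I$ because $w \in \mathcal{D}_I$, and $v_I \le_L v_J$ because $v_I \in \mathcal{D}_I \subseteq \mathcal{D}_I^J = [u_I,v_J]_L$. Theorem~\ref{thm:injectivehull} then immediately concludes that $\PP_I^J$ is the injective hull of $\K Y \cong \B(w,v_J)$. I do not anticipate a genuine obstacle here; the only step meriting a second thought is the action-compatibility, which is handled cleanly by the upper order ideal property.
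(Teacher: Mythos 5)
Your proof is correct, but it takes a different route from the paper's. The paper obtains Corollary~\ref{cor:hull} by dualizing Corollary~\ref{cor:cover}: it notes that $w_0w\in\mathcal{D}_{S\setminus I}$, so $\PP_{S\setminus J}^{S\setminus I}$ is the projective cover of $\B(u_{S\setminus J},w_0w)$, and then applies the dual equivalence $\hat{\upomega}$, using Lemma~\ref{lem:shortestlongest}(3) to identify $\hat{\upomega}[\B(u_{S\setminus J},w_0w)]$ with $\B(w,v_J)$ and Corollary~\ref{cor:PIJtwist} to identify $\hat{\upomega}[\PP_{S\setminus J}^{S\setminus I}]$ with $\PP_I^J$. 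You instead specialise Theorem~\ref{thm:injectivehull} directly, exhibiting $\B(w,v_J)$ as $\K Y$ for the upper order ideal $Y=[w,v_J]_L$ of $\mathcal{D}_I^J$ with $v_I\in Y$ --- which is the exact mirror of how the paper derives Corollary~\ref{cor:cover} from Theorem~\ref{thm:projectivecover}. All of your verifications check out: $u_I\le_L w$ gives $Y\subseteq\mathcal{D}_I^J$; the upper-order-ideal property gives the action compatibility (in particular, for $x\in Y$ with $s\notin\D_L(x)$, $sx\in\mathcal{D}_I^J$ forces $sx\in Y$ since $x<_L sx$); and $w\le_L v_I\le_L v_J$ gives $v_I\in Y$. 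Your route avoids the $w_0$-conjugation and set-complement bookkeeping and stays within the weak-order combinatorics, at the cost of redoing the module identification that the functorial argument gets for free; the paper's route makes the duality between the two corollaries explicit. Both are sound, since Theorem~\ref{thm:injectivehull} is already available at this point in the paper.
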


\begin{proof}
Since $w_0w\in \mathcal{D}_{S \setminus I}$, by Corollary \ref{cor:cover} we have that $\PP_{S \setminus J}^{S \setminus I}$ is the projective cover of $\B(u_{S\setminus J}, w_0w)$.  
Therefore $\hat{\upomega}[\PP_{S \setminus J}^{S \setminus I}]$ is the projective cover of 
\begin{equation*}
    \hat{\upomega}[\B(u_{S\setminus J}, w_0w)] \cong \B(w_0w_0w , w_0u_{S\setminus J}) = \B( w, v_{J}),
\end{equation*}
in which the equality is due to Lemma \ref{lem:shortestlongest}(3). By Corollary~\ref{cor:PIJtwist}, $\hat{\upomega}[\PP_{S \setminus J}^{S \setminus I}] \cong \PP_I^J$. 
\end{proof}

%%%%%%%%%%%%%%%%%%%%%%%%%%%%%%%%%%%%%%%%%%%%%%%%%%%%%%%%%%%

\section{Applications to modules for quasisymmetric functions}\label{sec:applications}

The Grothendieck group of finitely-generated $0$-Hecke modules in type A is isomorphic to the ring of quasisymmetric functions via the quasisymmetric characteristic \cite{DKLT}, and much recent work has been devoted to constructing $H_{\sym_n}(0)$-modules whose images under the quasisymmetric characteristic are important families of quasisymmetric functions. In this section, we apply results from Sections~\ref{sec:bruhat} and \ref{sec:projectivecovers} to uniformly recover a number of results on indecomposability, projective covers, and injective hulls for various such modules, and also obtain new results for the modules associated to the recently-introduced row-strict dual immaculate functions and row-strict extended Schur functions of Niese, Sundaram, van Willigenburg, Vega, and Wang \cite{NSvWVW:rowstrict, NSvWVW:modules}.

So far, we have indexed $H_W(0)$-modules by subsets of the generating set $S$ or by intervals in weak Bruhat order. On the other hand, $H_{\sym_n}(0)$-modules associated to quasisymmetric functions are typically indexed by \emph{compositions of $n$}: sequences of positive integers that sum to $n$. These are in bijection with subsets of $[n-1]$, and thus with subsets of the simple generators of $\sym_n$, as follows. If $\alpha = (\alpha_1,\ldots,\alpha_k)$ is a composition of $n$, then the associated subset ${\rm set}(\alpha)$ is $\{\alpha_1,\alpha_1+\alpha_2,\ldots, \alpha_1+\alpha_2 +\ldots + \alpha_{k-1}\}$. We denote the complement of $\set(\alpha)$ by $\set(\alpha)^c$ rather than $[n-1] \setminus {\rm set}(\alpha)$. The \emph{reversal} of $\alpha$, denoted by $\alpha^r$, is obtained by reversing the sequence $\alpha$.
   
\begin{example} Let $\alpha = (1,4,3)$. Then ${\rm set}(\alpha) = \{1,5\}$ and $\alpha^r = (3,4,1)$.
\end{example}
  
In what follows, we index projective indecomposable $H_{\sym_n}(0)$-modules by subsets of $[n-1]$, rather than subsets of $\{s_1, \ldots , s_{n-1}\}$: we let $i$ denote $s_i$. 
First we consider modules for the dual immaculate \cite{BBSSZ:dualimmaculate} and extended Schur \cite{Assaf.Searles} bases of quasisymmetric functions, and their row-strict analogues \cite{NSvWVW:rowstrict, NSvWVW:modules}. 
The \emph{diagram} $D(\alpha)$ associated to a composition $\alpha$ is the left-justified array of boxes with $\alpha_i$ boxes in the $i$th row from the top. A \emph{standard immaculate tableau} of shape $\alpha$ is a labelling of the boxes of $D(\alpha)$ by the integers $1, \ldots, n$, each used once, such that entries increase from left to right along rows and from top to bottom in the first column. A standard immaculate tableau is a $\emph{standard extended tableau}$ if the entries increase from top to bottom in every column. The set of standard immaculate tableaux of shape $\alpha$, and its subset of standard extended tableaux, are denoted by $\SIT(\alpha)$ and $\SET(\alpha)$ respectively. 

Let $T_0^\alpha$ denote the element of $\SET(\alpha)$ (and thus of $\SIT(\alpha)$) obtained by filling the boxes of $D(\alpha)$ with numbers $1, \ldots , n$ consecutively starting with the highest row from left to right, then the second-highest row from left to right, and so on. Let $T_1^\alpha$ denote the element of $\SIT(\alpha)$ obtained by filling the boxes of $D(\alpha)$ with numbers $1, \ldots , n$ consecutively starting with the first column from top to bottom, then the remainder of the lowest row from left to right, then the remainder of the second-lowest row from left to right, and so on. Finally, let $\iT_1^\alpha$ denote the element of $\SET(\alpha)$ obtained by filling the boxes of $D(\alpha)$ with numbers $1, \ldots , n$ consecutively starting with the first column from top to bottom, then the second column from top to bottom, and so on.

\begin{example}\label{ex:SIT}
The standard immaculate tableaux $\SIT(2,2)$ are in Figure~\ref{fig:SIT}. The standard extended tableaux $\SET(2,2)$ are the middle and rightmost tableaux. The leftmost tableau is $T_1^\alpha$, the middle tableau is $\iT_1^\alpha$, and the rightmost tableau is $T_0^\alpha$.    
\end{example}
\begin{figure}[h]
\[
\ytableaushort{14,23} \qquad \ytableaushort{13,24} \qquad \ytableaushort{12,34} 
\]
  \caption{The three standard immaculate tableaux of shape $(2,2)$.}
\label{fig:SIT}
\end{figure}
  
In \cite{BBSSZ:modules}, Berg, Bergeron, Saliola, Serrano and Zabrocki define a $H_{\sym_n}(0)$-action on the $\K$-span of $\SIT(\alpha)$, and show the quasisymmetric characteristics of the resulting modules $\mathcal{V}_\alpha$ are the dual immaculate functions of \cite{BBSSZ:dualimmaculate}. In \cite{Searles:0Hecke}, Searles defines an $H_{\sym_n}(0)$-action on the $\K$-span of $\SET(\alpha)$, and shows the quasisymmetric characteristics of the resulting modules $X_\alpha$ are the extended Schur functions of \cite{Assaf.Searles}. 

Jung, Kim, Lee and Oh \cite{JKLO} identify both $\mathcal{V}_\alpha$ and $X_\alpha$ as weak Bruhat interval modules as follows. For $T \in \SIT(\alpha)$, the \emph{reading word} $\text{rw}(T)$ of $T$ is the permutation obtained from reading the entries in each row in $T$ from right to left, starting with the topmost row and iterating downwards. The isomorphisms
 \begin{equation}
\mathcal{V}_\alpha \cong \B(\text{rw}(T_0^\alpha), \text{rw}(T_1^\alpha)) \,\,\, \mbox{ and } \,\,\, X_\alpha \cong \B(\text{rw}(T_0^\alpha), \text{rw}(\iT_1^\alpha)) \label{eq:VXiso}
\end{equation}
are proved in \cite[Theorem 4.4]{JKLO}. It is also shown in the proof of \cite[Theorem 4.4]{JKLO} that $\text{rw}(T_{0}^{\alpha}) = u_{\text{set}(\alpha)^c}$ and that 
 $\text{rw}(\iT_1^\alpha) \leq_L \text{rw}(T_{1}^{\alpha}) \leq_L  v_{\text{set}(\alpha)^c}$. 
Therefore (as also shown in \cite{CKNO:projective}) $\mathcal{V}_\alpha$ and $X_\alpha$ are quotients of $\PP_{\text{set}(\alpha)^c}$, and $X_\alpha$ is a quotient of $\mathcal{V}_\alpha$. 

Indecomposability of $\mathcal{V}_\alpha$ was established in \cite[Theorem 3.12]{BBSSZ:modules}, and indecomposability of $X_\alpha$ was established in \cite[Theorem 3.13]{Searles:0Hecke}. Proposition~\ref{prop:indecomposablebruhat} in conjunction with \eqref{eq:VXiso} recovers these results, and additionally shows that any quotient of these modules is indecomposable. 

\begin{theorem} For any composition $\alpha$, the modules $\mathcal{V}_\alpha$, $X_\alpha$, and all quotients of these modules are indecomposable.
 \end{theorem}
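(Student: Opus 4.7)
The plan is to realize both $\mathcal{V}_\alpha$ and $X_\alpha$ as quotients of the single projective indecomposable module $\PP_{\text{set}(\alpha)^c}$, after which Proposition~\ref{prop:indecomposable} (every quotient of $\PP_I$ is indecomposable) immediately delivers the conclusion for $\mathcal{V}_\alpha$, $X_\alpha$, and every one of their quotients simultaneously, since a quotient of a quotient is itself a quotient.

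First, I would invoke the isomorphisms \eqref{eq:VXiso} together with the facts, also recorded immediately after \eqref{eq:VXiso}, that $\text{rw}(T_0^\alpha) = u_{\text{set}(\alpha)^c}$ and $\text{rw}(\iT_1^\alpha) \leq_L \text{rw}(T_1^\alpha) \leq_L v_{\text{set}(\alpha)^c}$. These inequalities place both weak Bruhat intervals $[\text{rw}(T_0^\alpha), \text{rw}(T_1^\alpha)]_L$ and $[\text{rw}(T_0^\alpha), \text{rw}(\iT_1^\alpha)]_L$ inside the interval $[u_{\text{set}(\alpha)^c}, v_{\text{set}(\alpha)^c}]_L$, whose associated weak Bruhat interval module is exactly $\PP_{\text{set}(\alpha)^c}$.

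Next, I would apply Lemma~\ref{lem:quotientinterval}: the set-theoretic complements of the two subintervals above, taken inside $[u_{\text{set}(\alpha)^c}, v_{\text{set}(\alpha)^c}]_L$, are upper order ideals in that interval, so their $\K$-spans are $H_W(0)$-submodules of $\PP_{\text{set}(\alpha)^c}$. Quotienting by these submodules produces modules whose underlying spaces and $H_W(0)$-actions match those of $\B(\text{rw}(T_0^\alpha), \text{rw}(T_1^\alpha))$ and $\B(\text{rw}(T_0^\alpha), \text{rw}(\iT_1^\alpha))$ respectively, which in turn are isomorphic to $\mathcal{V}_\alpha$ and $X_\alpha$ by \eqref{eq:VXiso}.

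There is no substantive obstacle here: once the identification of both modules as quotients of $\PP_{\text{set}(\alpha)^c}$ is made explicit, the indecomposability of $\mathcal{V}_\alpha$, $X_\alpha$, and all of their quotients is a direct application of Proposition~\ref{prop:indecomposable}. This recovers the earlier indecomposability results for $\mathcal{V}_\alpha$ and $X_\alpha$ in a uniform way and extends them automatically to arbitrary quotients.
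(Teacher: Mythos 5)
Your proof is correct and follows essentially the same route as the paper: the paper simply packages the combination of Lemma~\ref{lem:quotientinterval} and Proposition~\ref{prop:indecomposable} into Proposition~\ref{prop:indecomposablebruhat} and cites that, whereas you unpack those two steps explicitly to exhibit $\mathcal{V}_\alpha$ and $X_\alpha$ as quotients of $\PP_{\mathrm{set}(\alpha)^c}$. The observation that a quotient of a quotient is again a quotient, so the conclusion extends to all quotients at once, is exactly the intended argument.
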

 \begin{proof}
By \eqref{eq:VXiso}, both $\mathcal{V}_\alpha$ and $X_\alpha$ are weak Bruhat interval modules whose shortest element is the shortest element of a right descent class and whose longest element is in the same right descent class. Therefore, by Proposition~\ref{prop:indecomposablebruhat}, these modules and their quotients are indecomposable.
\end{proof}
 
The projective covers for $\mathcal{V}_\alpha$ and $X_\alpha$ were determined respectively in \cite[Theorem 3.2]{CKNO:projective} and \cite[Theorem 3.5]{CKNO:projective}. One can recover these results via Corollary~\ref{cor:cover}.

 \begin{theorem} 
For any composition $\alpha$, the projective cover of $\mathcal{V}_\alpha$ and $X_\alpha$ is $\PP_{{\rm set}(\alpha)^c}$.
 \end{theorem}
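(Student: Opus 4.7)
The plan is to apply Corollary~\ref{cor:cover} with $I = J = \text{set}(\alpha)^c$. Recall that Corollary~\ref{cor:cover} states that whenever $I \subseteq J$ and $w \in \mathcal{D}_J$, the projective cover of $\B(u_I, w)$ is $\PP_I^J$. In the special case $I = J$, the module $\PP_I^J$ collapses to the projective indecomposable $\PP_I$.

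By \eqref{eq:VXiso}, we have $\mathcal{V}_\alpha \cong \B(\text{rw}(T_0^\alpha), \text{rw}(T_1^\alpha))$ and $X_\alpha \cong \B(\text{rw}(T_0^\alpha), \text{rw}(\iT_1^\alpha))$. The excerpt preceding \eqref{eq:VXiso} already records that $\text{rw}(T_0^\alpha) = u_{\text{set}(\alpha)^c}$ and that $\text{rw}(\iT_1^\alpha) \le_L \text{rw}(T_1^\alpha) \le_L v_{\text{set}(\alpha)^c}$, so both modules have the form $\B(u_I, w)$ with $I = \text{set}(\alpha)^c$ and $w$ lying in the weak Bruhat interval $[u_I, v_I]_L$.

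The key observation is that $[u_I, v_I]_L$ is precisely the right descent class $\mathcal{D}_I$, since $u_I$ and $v_I$ are respectively its shortest and longest elements (as noted in Section~\ref{sec:bruhat}). Consequently both $\text{rw}(T_1^\alpha)$ and $\text{rw}(\iT_1^\alpha)$ belong to $\mathcal{D}_{\text{set}(\alpha)^c}$, and Corollary~\ref{cor:cover} (with $I = J = \text{set}(\alpha)^c$) immediately yields that $\PP_{\text{set}(\alpha)^c}$ is the projective cover of each of $\mathcal{V}_\alpha$ and $X_\alpha$.

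There is no genuine obstacle here: all the combinatorial work identifying the reading words $\text{rw}(T_0^\alpha)$, $\text{rw}(T_1^\alpha)$ and $\text{rw}(\iT_1^\alpha)$ with elements of the right descent class indexed by $\text{set}(\alpha)^c$ has already been done in \cite{JKLO}, and the projective cover result is obtained simply by feeding this data into Corollary~\ref{cor:cover}. The only thing worth emphasising in the write-up is that the hypothesis $w \in \mathcal{D}_J$ of Corollary~\ref{cor:cover} is automatically satisfied here because membership in the interval $[u_I, v_I]_L$ forces membership in $\mathcal{D}_I$.
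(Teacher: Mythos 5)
Your proof is correct and follows essentially the same route as the paper: both arguments feed the identifications $\mathrm{rw}(T_0^\alpha)=u_{\mathrm{set}(\alpha)^c}$ and $\mathrm{rw}(T_1^\alpha), \mathrm{rw}(\iT_1^\alpha)\in\mathcal{D}_{\mathrm{set}(\alpha)^c}$ from \eqref{eq:VXiso} into Corollary~\ref{cor:cover} with $I=J=\mathrm{set}(\alpha)^c$. Your explicit remark that membership in $[u_I,v_I]_L=\mathcal{D}_I$ is what verifies the hypothesis $w\in\mathcal{D}_J$ is a slightly more careful spelling-out of the step the paper leaves implicit.
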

 \begin{proof}
  By \eqref{eq:VXiso}, both $\mathcal{V}_\alpha$ and $X_\alpha$ are weak Bruhat interval modules whose shortest element is the shortest element of the right descent class $\mathcal{D}_{\text{set}(\alpha)^c}$, and whose longest element is also in $\mathcal{D}_{\text{set}(\alpha)^c}$. The statement then follows from Corollary~\ref{cor:cover}. 
 \end{proof}
 
The \emph{row-strict dual immaculate functions} and \emph{row-strict extended Schur functions} \cite{NSvWVW:rowstrict, NSvWVW:modules} are the images of the dual immaculate functions and, respectively, the extended Schur functions under a certain involution on the ring of quasisymmetric functions. In \cite{NSvWVW:modules}, Niese, Sundaram, van Willigenburg, Vega and Wang define a new $H_{\sym_n}(0)$-action on the $\K$-span of $\SIT(\alpha)$, and show the quasisymmetric characteristics of the resulting $H_{\sym_n}(0)$-modules $\mathcal{W}_\alpha$ are the row-strict dual immaculate functions. This action is 
 \begin{align}\label{eq:Waction}
 \pi_i(T) =
  \begin{cases}
                                   T & \text{if $i+1$ is strictly below $i$ in $T$,} \\ 0 & \text{if $i+1$ is in the same row as $i$ in $T$,}   \\
                                   s_i(T) & \text{if $i+1$ is strictly above $i$ in $T$,} 
  \end{cases}
\end{align}
where $\pi_i$ denotes $\pi_{s_i}$, and $s_i(T)$ is the tableau obtained by exchanging the entries $i$ and $i+1$ in $T$. It is moreover shown in \cite{NSvWVW:modules} that the quasisymmetric characteristics of the modules $\mathcal{Z}_\alpha$ resulting from the action \eqref{eq:Waction} on the $\K$-span of $\SET(\alpha)$ are the row-strict extended Schur functions.

\begin{example}\label{ex:W22}
The three elements of $\SIT(2,2)$, along with the $H_{\sym_4}(0)$-action \eqref{eq:Waction} on $\SIT(2,2)$ are shown in Figure~\ref{fig:SITgraph}. 
\end{example}

\begin{figure}[h]
\centering
\begin{tikzpicture}[thick, scale=1]
    \node (1) at (3,4) {$\begin{ytableau} 
      	  1 & 2 \\ 
			3 & 4 
		\end{ytableau}$};
  \node (2) at (0,4) {$\begin{ytableau} 
			1 & 3 \\ 
			2 & 4 
		\end{ytableau}$};
  \node (3) at (-3,4) {$\begin{ytableau} 
    	    1 & 4 \\ 
			2 & 3 
		\end{ytableau}$};		

  \node (30) at (-1.55,2.8) {$0$};
  \node (10) at (4.5,2.8) {$0$};

 \node (r2) at (-2.8,5) {$\pi_1$};
 \node (r2) at (0.4,5) {$\pi_1, \pi_3$};
 \node (r3) at (3.1,5) {$\pi_2$};
 
  \node (s1) at (-1.55,4.25) {$\pi_3$};
  \node (s2) at (1.45,4.25) {$\pi_2$};

  \node (p3*) at (-1.7, 3 .4) {$\pi_2$};
  \node (p2*) at (4.5,3 .4) {$\pi_1, \pi_3$};
  
\draw[->] (3)--(2);
\draw[->] (2)--(1);

\draw[->] (1)--(10);
\draw[->] (3)--(30);

\draw [->] (1) to [out=100,in=155, looseness=3.5] (1);
\draw [->] (2) to [out=100,in=155, looseness=3.5] (2);
\draw [->] (3) to [out=100,in=155, looseness=3.5] (3);

\end{tikzpicture} %%%%%%%%%%%
\caption{The $H_{\sym_4}(0)$-action on $\SIT(2,2)$ defining the module $\mathcal{W}_{(2,2)}$.}
\label{fig:SITgraph}
\end{figure}
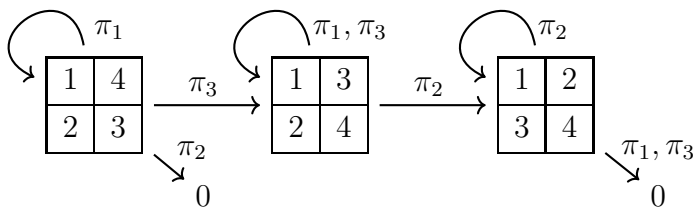

\begin{remark}
We follow \cite{BBSSZ:modules} in referring to the modules for dual immaculate functions as $\mathcal{V}_\alpha$. On the other hand, in \cite{NSvWVW:modules} these modules are referred to as $\mathcal{W}_\alpha$ and the modules for row-strict dual immaculate functions are referred to as $\mathcal{V}_\alpha$. Therefore, our use of $\mathcal{V}_\alpha$ and $\mathcal{W}_\alpha$ is the reverse of how  $\mathcal{V}_\alpha$ and $\mathcal{W}_\alpha$ are used in \cite{NSvWVW:modules}.
\end{remark}

To apply the results of Sections~\ref{sec:bruhat} and \ref{sec:projectivecovers}, we begin by precisely identifying $\mathcal{W}_\alpha$ and $\mathcal{Z}_\alpha$ as weak Bruhat interval modules whose underlying set is a subset of a particular right descent class. For $T\in \SIT(\alpha)$, define the \emph{row-strict reading word} $\rw_{\mathcal R}(T)$ of $T$ to be the permutation obtained by reading the entries of $T$ from left to right along rows, beginning at the bottom row and proceeding to the top row.

\begin{theorem}\label{thm:bruhatrowstrict}
For any composition $\alpha$,
\[\mathcal{W}_\alpha \cong \B(\rwr(T_1^\alpha), \rwr(T_0^\alpha)) \,\,\, \mbox{ and } \,\,\, \mathcal{Z}_\alpha \cong \B(\rwr(\iT_1^\alpha), \rwr(T_0^\alpha)).\] 
Moreover, both of these modules are submodules of $\PP_{\set(\alpha^r)}$.
\end{theorem}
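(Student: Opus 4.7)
The plan is to realise $\mathcal{W}_\alpha$ and $\mathcal{Z}_\alpha$ as weak Bruhat interval submodules inside $\PP_{\set(\alpha^r)}$ by transporting the action \eqref{eq:Waction} through the row-strict reading word. I would begin by analysing the descents of $\rwr(T)$ for arbitrary $T \in \SIT(\alpha)$: row-increasingness of $T$ gives no intra-row descents in $\rwr(T)$, while at each row-transition position (lying in $\set(\alpha^r)$) the final entry of the lower row strictly exceeds the first entry of the upper row, by first-column-increasingness of $T$. Hence $\D_R(\rwr(T)) = \set(\alpha^r)$ for every $T \in \SIT(\alpha)$, so $\rwr(\SIT(\alpha)) \subseteq \mathcal{D}_{\set(\alpha^r)} = [u_{\set(\alpha^r)}, v_{\set(\alpha^r)}]_L$, placing both $\rwr(T_1^\alpha)$ and $\rwr(\iT_1^\alpha)$ in $\mathcal{D}_{\set(\alpha^r)}$. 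Reading $T_0^\alpha$ from the bottom row upward, left-to-right, produces a permutation whose successive blocks of lengths $\alpha_k, \alpha_{k-1}, \ldots, \alpha_1$ contain the largest $\alpha_k$ entries, then the next $\alpha_{k-1}$, and so on, each increasing within its block; this matches the standard description of the longest element of $\mathcal{D}_{\set(\alpha^r)}$, giving $\rwr(T_0^\alpha) = v_{\set(\alpha^r)}$.

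The action correspondence is then a case check. Since the row-strict reading word proceeds bottom-up, $s_i \in \D_L(\rwr(T))$ if and only if $i+1$ appears before $i$ in $\rwr(T)$, equivalently, if and only if $i+1$ is strictly below $i$ in $T$. Matching \eqref{eq:Waction} with \eqref{eq:bruhataction}: the ``below'' case fixes both $T$ and $\rwr(T)$; in the ``same row'' case, $s_i T$ violates row-strictness and the swapped permutation acquires a descent at a non-row-boundary position, placing it outside $\mathcal{D}_{\set(\alpha^r)}$ and hence outside $\rwr(\SIT(\alpha))$, matching $\pi_i T = 0$; in the ``above'' case, a short case analysis on whether $i$ or $i+1$ sits in the first column shows $s_i T \in \SIT(\alpha)$ and $\rwr(s_i T) = s_i \rwr(T)$, with the swap length-increasing. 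Hence the $\K$-linear extension of $T \mapsto \rwr(T)$ is an injective $H_{\sym_n}(0)$-module homomorphism from $\mathcal{W}_\alpha$ into $\PP_{\set(\alpha^r)}$, and analogously from $\mathcal{Z}_\alpha$, since the ``above'' case also preserves the all-columns-increasing property of SET.

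To finish, I would identify the image $\rwr(\SIT(\alpha))$ (respectively $\rwr(\SET(\alpha))$) with the interval $[\rwr(T_1^\alpha), \rwr(T_0^\alpha)]_L$ (respectively $[\rwr(\iT_1^\alpha), \rwr(T_0^\alpha)]_L$). One inclusion follows by iterating the ``above'' case: every $T \in \SIT(\alpha)$ is reachable from $T_1^\alpha$ by a sequence of length-increasing swaps, yielding $\rwr(T_1^\alpha) \le_L \rwr(T) \le_L \rwr(T_0^\alpha)$, and likewise every $T \in \SET(\alpha)$ is reachable from $\iT_1^\alpha$. For the reverse inclusion, given $w$ in the interval, one assembles the one-line notation of $w$ into $D(\alpha)$ row-by-row starting from the bottom (the first $\alpha_k$ entries in row $k$, the next $\alpha_{k-1}$ in row $k-1$, and so on), and uses the constraints $\D_R(w) = \set(\alpha^r)$ together with $\rwr(T_1^\alpha) \le_L w$ (respectively $\rwr(\iT_1^\alpha) \le_L w$) to verify the first-column-increasing (respectively all-columns-increasing) property of the resulting filling. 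The submodule claim then follows immediately from Lemma~\ref{lem:quotientinterval}, since $[\rwr(T_1^\alpha), v_{\set(\alpha^r)}]_L$ and $[\rwr(\iT_1^\alpha), v_{\set(\alpha^r)}]_L$ are upper order ideals of $\mathcal{D}_{\set(\alpha^r)}$. The main obstacle will be this reverse inclusion: translating the weak Bruhat lower bound on $w$ into the column structure needed for a valid SIT or SET filling.
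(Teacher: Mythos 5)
Your overall route is the paper's: the same reading word, the same target interval inside $\mathcal{D}_{\set(\alpha^r)}$, and the same case-by-case matching of \eqref{eq:Waction} with \eqref{eq:bruhataction}. The descent analysis, the identification $\rwr(T_0^\alpha)=v_{\set(\alpha^r)}$, and the action check are all correct as sketched. The one genuine gap is exactly the step you flag: showing that every $w\in[\rwr(T_1^\alpha),\rwr(T_0^\alpha)]_L$ equals $\rwr(T)$ for some $T\in\SIT(\alpha)$. The constraint $\D_R(w)=\set(\alpha^r)$ only yields row-increasingness of the assembled filling; it does not yield the first-column condition. For $\alpha=(2,2)$ the permutation $1423$ has right descent set $\{2\}=\set(\alpha^r)$ but assembles to the filling with top row $2,3$ and bottom row $1,4$, whose first column decreases from top to bottom; consistently, $1423\notin[2314,3412]_L$. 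So the lower bound $\rwr(T_1^\alpha)\le_L w$ must be converted into the column condition, and your sketch offers no mechanism for doing that conversion.

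The paper's mechanism is a concrete criterion for left weak order (from the proof of Lemma 7.11 of the cited diagram-supermodules paper): $u\le_L w$ if and only if every ascent pair of $w$ (a pair $i<j$ with $w(i)<w(j)$) is also an ascent pair of $u$. With this, both inclusions fall out at once: for $T\in\SIT(\alpha)$ the intra-block pairs of $\rwr(T)$ are always ascent pairs and the pairs of block-initial positions never are; $\rwr(T_0^\alpha)$ has exactly the forced ascent pairs and $\rwr(T_1^\alpha)$ has all of the unforced ones, so $\rwr(T_1^\alpha)\le_L\rwr(T)\le_L\rwr(T_0^\alpha)$, and conversely any $w$ in the interval has its set of ascent pairs wedged between the two extremes, which is precisely row-increasingness plus the first-column condition. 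You should either import this criterion (or an equivalent inversion-set description of $\le_L$) or prove the reverse inclusion by induction down the interval; as written, the hard half of the bijection between the interval and $\rwr(\SIT(\alpha))$ is asserted rather than proved. (Your forward inclusion via repeated length-increasing swaps starting from $T_1^\alpha$ also needs a word on why every tableau is so reachable, but that part is routine.)
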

\begin{proof}
We prove this for $\mathcal{W}_\alpha$; the argument for $\mathcal{Z}_\alpha$ is similar. Suppose $\alpha=(\alpha_1, \ldots , \alpha_k)$.
If $T\in \SIT(\alpha)$, then since entries increase along each row and down the first column, $\rwr(T)$ is a permutation that consists of an increasing run of length $\alpha_k$, followed by an increasing run of length $\alpha_{k-1}$, and so on, such that the sequence consisting of the first elements of each increasing run decreases from left to right. 
Conversely, any such permutation is clearly $\rwr(T)$ for some $T\in \SIT(\alpha)$. For any such permutation, right descents occur precisely at the end of each increasing run, hence its right descent set is $\set(\alpha^r)$.

We now show the set of such permutations is precisely the stated interval in left weak Bruhat order. 
For $1\le i < j \le n$, the pair $(i,j)$ is an \emph{ascent pair} for $w\in \sym_n$ if $w(i)<w(j)$. It was shown in the proof of \cite[Lemma 7.11]{Searles:diagramsupermodules} that $u\le_L w$ if and only if every ascent pair of $w$ is also an ascent pair of $u$. For any $T\in \SIT(\alpha)$, every pair $(i,j)$ in $\rwr(T)$ where $w(i),w(j)$ are in the same increasing run is an ascent pair, and every pair $(i,j)$ where $w(i),w(j)$ are the first elements of an increasing run is not an ascent pair. 
Moreover, $\rwr(T_0^\alpha)$ is the permutation such that none of the remaining pairs are ascent pairs, and $\rwr(T_1^\alpha)$ is the permutation such that all of the remaining pairs are ascent pairs. Therefore, $\rwr(T_1^\alpha)\le_L \rwr(T) \le_L \rwr(T_0^\alpha)$ for all $T\in \SIT(\alpha)$. Conversely, any permutation in $\B(\rwr(T_1^\alpha), \rwr(T_0^\alpha))$ must satisfy the above condition on ascent pairs, and hence is $\rwr(T)$ for some $T\in \SIT(\alpha)$, as required. Also, it is easy to see that $\rwr(T_0^\alpha)$ is the longest element of the right descent class $\mathcal{D}_{\set(\alpha^r)}$, and hence $\B(\rwr(T_1^\alpha), \rwr(T_0^\alpha))$ is a submodule of $\PP_{\set(\alpha^r)}$.

Finally we show the $H_{\sym_n}(0)$-action \eqref{eq:Waction} on $\SIT(\alpha)$ agrees with the $H_{\sym_n}(0)$-action on $\B(\rwr(T_1^\alpha), \rwr(T_0^\alpha))$. By definition of $\rwr(T)$, we have $\pi_i(T) = T$ if and only if $s_i\in \D_L(\rwr(T))$. Let $s_i\notin \D_L(\rwr(T))$. Since $i+1$ appears to the right of $i$ in $\rwr(T)$, $i+1$ appears weakly above $i$ in $T$. Now, $s_i \rwr(T)$ is the row-strict reading word of an element of $\SIT(\alpha)$ if and only if $i$ and $i+1$ are in different increasing runs in $\rwr(T)$, that is, if and only if $i+1$ appears strictly higher than $i$ in $T$, since applying $s_i$ to $\rwr(T)$ introduces an additional right descent precisely when $i$ and $i+1$ are in the same increasing run in $\rwr(T)$. Therefore, the $H_{\sym_n}(0)$-actions on $\SIT(\alpha)$ and $\B(\rwr(T_1^\alpha), \rwr(T_0^\alpha))$ agree.
\end{proof}

 \begin{example} In Figure~\ref{fig:SITgraph}, observe that $\rwr(T_0^{(2,2)}) = 3412$ and $\rwr(T_1^{(2,2)}) = 2314$. Hence $\mathcal{W}_{(2,2)}\cong \B(2314, 3412)$.
 \end{example}

The indecomposability of $\mathcal{W}_\alpha$ and $\mathcal{Z}_\alpha$ were established in \cite[Theorem 6.15]{NSvWVW:modules} and \cite[Theorem 7.13]{NSvWVW:modules}. Theorem~\ref{thm:bruhatrowstrict} together with Proposition~\ref{prop:indecomposablebruhat} recovers these results, and additionally shows that any submodule of these modules is indecomposable.

\begin{corollary}
For any composition $\alpha$, the modules $\mathcal{W}_\alpha$, $\mathcal{Z}_\alpha$, and all submodules of these modules are indecomposable.  
\end{corollary}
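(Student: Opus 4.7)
The plan is to invoke Theorem~\ref{thm:bruhatrowstrict} to present $\mathcal{W}_\alpha$ and $\mathcal{Z}_\alpha$ as weak Bruhat interval modules $\B(w,v)$ whose endpoints $w,v$ share a common right descent set, so that we land precisely in the hypothesis of Proposition~\ref{prop:indecomposablebruhat} (the first of the two forms, $\B(w,v_I)$ with $w\in\mathcal{D}_I$), which delivers both the indecomposability of the modules themselves and of all their submodules at once.

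More concretely, I would proceed as follows. First, by Theorem~\ref{thm:bruhatrowstrict}, $\mathcal{W}_\alpha \cong \B(\rwr(T_1^\alpha),\rwr(T_0^\alpha))$ and $\mathcal{Z}_\alpha \cong \B(\rwr(\iT_1^\alpha),\rwr(T_0^\alpha))$, and the proof of that theorem shows that $\rwr(T_0^\alpha)$ is the longest element of $\mathcal{D}_{\set(\alpha^r)}$, i.e.\ $\rwr(T_0^\alpha)=v_{\set(\alpha^r)}$. Next, the same reading-word analysis in the proof of Theorem~\ref{thm:bruhatrowstrict} establishes that for any $T\in\SIT(\alpha)$, the word $\rwr(T)$ has right descent set exactly $\set(\alpha^r)$ (descents occur precisely at the ends of the increasing runs determined by $\alpha^r$). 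In particular, both $\rwr(T_1^\alpha)$ and $\rwr(\iT_1^\alpha)$ lie in $\mathcal{D}_{\set(\alpha^r)}$.

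With $I=\set(\alpha^r)$, both $\mathcal{W}_\alpha$ and $\mathcal{Z}_\alpha$ are therefore of the form $\B(w,v_I)$ with $w\in\mathcal{D}_I$. Proposition~\ref{prop:indecomposablebruhat} then directly yields that these modules, together with every one of their submodules, are indecomposable, which is the claim.

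There is essentially no obstacle: all the combinatorial work was done in Theorem~\ref{thm:bruhatrowstrict}, and the structural input comes from Proposition~\ref{prop:indecomposablebruhat}, whose hypotheses are satisfied verbatim. The only point that deserves an explicit sentence in the write-up is the verification that the shortest endpoints $\rwr(T_1^\alpha)$ and $\rwr(\iT_1^\alpha)$ belong to the same right descent class $\mathcal{D}_{\set(\alpha^r)}$ as the longest endpoint $\rwr(T_0^\alpha)$, so that one is genuinely applying the $\B(w,v_I)$ case of Proposition~\ref{prop:indecomposablebruhat} rather than the more general (and weaker) statement for arbitrary intervals.
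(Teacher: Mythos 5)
Your proposal is correct and follows the paper's own argument exactly: Theorem~\ref{thm:bruhatrowstrict} exhibits $\mathcal{W}_\alpha$ and $\mathcal{Z}_\alpha$ as modules $\B(w,v_I)$ with $I=\set(\alpha^r)$ and $w\in\mathcal{D}_I$, and Proposition~\ref{prop:indecomposablebruhat} then gives indecomposability of the modules and all their submodules. Your extra care in checking that the shortest endpoints lie in the same right descent class as $v_{\set(\alpha^r)}$ is exactly the point the paper's proof relies on.
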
  
 \begin{proof}
 By Theorem~\ref{thm:bruhatrowstrict}, $\mathcal{W}_\alpha$ and $\mathcal{Z}_\alpha$ are weak Bruhat interval modules whose longest element is the longest element of a right descent class and whose shortest element is in the same right descent class. Therefore, by Proposition~\ref{prop:indecomposablebruhat}, these modules and their submodules are indecomposable.
 \end{proof}
 
Using Corollary~\ref{cor:hull}, we determine the injective hulls of $\mathcal{W}_\alpha$ and $\mathcal{Z}_\alpha$.

\begin{corollary}
For any composition $\alpha$, the injective hull of $\mathcal{W}_\alpha$ and $\mathcal{Z}_\alpha$ is $\PP_{{\rm set}(\alpha^r)}$.    
\end{corollary}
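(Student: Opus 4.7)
The plan is to invoke Corollary~\ref{cor:hull}, which identifies $\PP_I^J$ as the injective hull of $\B(w, v_J)$ whenever $I \subseteq J$ and $w \in \mathcal{D}_I$. To obtain the single projective indecomposable $\PP_{\set(\alpha^r)}$ as the injective hull, I would apply this with $I = J = \set(\alpha^r)$, so that $\PP_I^J = \PP_{\set(\alpha^r)}$. The task therefore reduces to exhibiting $\mathcal{W}_\alpha$ and $\mathcal{Z}_\alpha$ as modules of the form $\B(w, v_{\set(\alpha^r)})$ with $w \in \mathcal{D}_{\set(\alpha^r)}$.

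First I would invoke Theorem~\ref{thm:bruhatrowstrict} to obtain the isomorphisms $\mathcal{W}_\alpha \cong \B(\rwr(T_1^\alpha), \rwr(T_0^\alpha))$ and $\mathcal{Z}_\alpha \cong \B(\rwr(\iT_1^\alpha), \rwr(T_0^\alpha))$. Next I would extract two facts already established in the proof of Theorem~\ref{thm:bruhatrowstrict}: every permutation $\rwr(T)$ with $T \in \SIT(\alpha)$ has right descent set exactly $\set(\alpha^r)$, since its right descents occur precisely at the ends of the increasing runs; and $\rwr(T_0^\alpha)$ is the longest element of $\mathcal{D}_{\set(\alpha^r)}$, i.e.\ $\rwr(T_0^\alpha) = v_{\set(\alpha^r)}$. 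Since $T_1^\alpha, \iT_1^\alpha \in \SIT(\alpha)$, both $\rwr(T_1^\alpha)$ and $\rwr(\iT_1^\alpha)$ lie in $\mathcal{D}_{\set(\alpha^r)}$, and Corollary~\ref{cor:hull} immediately yields $\PP_{\set(\alpha^r)}$ as the injective hull of each module.

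There is no substantial obstacle, since the interval descriptions of Theorem~\ref{thm:bruhatrowstrict} and the injective hull machinery of Corollary~\ref{cor:hull} do all the work. The result is the natural dual of the projective cover statement obtained earlier for $\mathcal{V}_\alpha$ and $X_\alpha$: in both cases the underlying weak Bruhat interval sits inside a single right descent class, and whichever endpoint of the interval coincides with the shortest or longest element of that class controls whether one reads off a projective cover or an injective hull.
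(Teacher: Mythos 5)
Your proposal is correct and follows exactly the paper's own argument: the paper likewise deduces from Theorem~\ref{thm:bruhatrowstrict} that $\mathcal{W}_\alpha$ and $\mathcal{Z}_\alpha$ are weak Bruhat interval modules whose longest element is $v_{\set(\alpha^r)}$ and whose shortest element lies in $\mathcal{D}_{\set(\alpha^r)}$, and then applies Corollary~\ref{cor:hull} with $I = J = \set(\alpha^r)$. No gaps; your explicit note that $T_1^\alpha, \iT_1^\alpha \in \SIT(\alpha)$ forces their reading words into $\mathcal{D}_{\set(\alpha^r)}$ is precisely the fact the paper relies on.
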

\begin{proof}
By Theorem~\ref{thm:bruhatrowstrict}, $\mathcal{W}_\alpha$ and $\mathcal{Z}_\alpha$ are weak Bruhat interval modules whose longest element is the longest element of the right descent class $\mathcal{D}_{\set(\alpha^r)}$, and whose shortest element is also in $\mathcal{D}_{\set(\alpha^r)}$.
The statement then follows from Corollary~\ref{cor:hull}.
\end{proof}

\begin{remark}
One could replace the first two paragraphs of the proof of Theorem~\ref{thm:bruhatrowstrict} by noting that $\rwr(T) = \rw(T)w_0$ for all $T\in \SIT(\alpha)$ and appealing to \eqref{eq:VXiso}. However, we wished to demonstrate how this structure could be determined directly; this same method could alternatively be used to prove \eqref{eq:VXiso}. 
Additionally, the fact that $\rwr(T) = \rw(T)w_0$, in conjunction with Theorem~\ref{thm:bruhatrowstrict}, implies that $\mathcal{W}_\alpha \cong \hat{\uptheta}[\mathcal{V}_{\alpha}]$ and similarly $\mathcal{Z}_\alpha \cong \hat{\uptheta}[X_\alpha]$. This provides an alternative way to obtain indecomposability and injective hulls for $\mathcal{W}_\alpha$ and $\mathcal{Z}_\alpha$. 
 We note the fact that the modules for row-strict dual immaculate and row-strict extended Schur functions can be obtained by applying $\hat{\uptheta}$ to the modules for dual immaculate and extended Schur functions is observed in \cite[Table 4.1]{Choi.Kim.Oh:23}. 
\end{remark}

For completeness, we also give the projective cover of $\mathcal{W}_\alpha$. Choi, Kim, Nam, and Oh showed that the injective hull of $\mathcal{V}_\alpha$ is $\oplus_{\beta \in [\ul{\bb\alpha}]} \PP_{\text{set}(\beta)^c}$ \cite[Theorem 4.1]{CKNO:homological}, where $[\ul{\bb\alpha}]$ is a particular set of compositions obtained from $\alpha$; see \cite[Section 4]{CKNO:homological} for a full definition of $[\ul{\bb\alpha}]$.

\begin{theorem}
For any composition $\alpha$, the projective cover of $\mathcal{W}_\alpha$ is $\oplus_{\beta \in [\ul{\bb\alpha}]} \PP_{{\rm set}(\beta^r)}$. 
\end{theorem}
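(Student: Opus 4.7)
The plan is to derive the projective cover of $\mathcal{W}_\alpha$ by transporting the known injective hull of $\mathcal{V}_\alpha$ across the dual equivalence $\hat{\uptheta}$. The preceding remark already identifies $\mathcal{W}_\alpha \cong \hat{\uptheta}[\mathcal{V}_\alpha]$, so since $M\mapsto \hat{\uptheta}[M]$ is a dual equivalence of $H_{\sym_n}(0)$-mod, one has that $Q$ is the injective hull of $\mathcal{V}_\alpha$ if and only if $\hat{\uptheta}[Q]$ is the projective cover of $\mathcal{W}_\alpha$. Invoking \cite[Theorem 4.1]{CKNO:homological}, the injective hull of $\mathcal{V}_\alpha$ is $\bigoplus_{\beta \in [\ul{\bb\alpha}]} \PP_{\set(\beta)^c}$, so the projective cover of $\mathcal{W}_\alpha$ is isomorphic to $\bigoplus_{\beta \in [\ul{\bb\alpha}]} \hat{\uptheta}[\PP_{\set(\beta)^c}]$, using that $\hat{\uptheta}$ preserves finite direct sums.

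It then remains to identify each summand. By Corollary~\ref{cor:PIJtwist} with $I=J=\set(\beta)^c$, we have $\hat{\uptheta}[\PP_{\set(\beta)^c}]\cong \PP_{S\setminus w_0\set(\beta)^c w_0}$, so the goal reduces to the purely combinatorial identity
\[
S \setminus w_0\, \set(\beta)^c\, w_0 = \set(\beta^r)
\]
in $\sym_n$. Since conjugation by $w_0$ sends $s_i$ to $s_{n-i}$, viewing subsets of $S$ as subsets of $[n-1]$ the left-hand side equals $\{n-i : i \in \set(\beta)\}$. Writing $\beta=(\beta_1,\ldots,\beta_k)$, one computes $\{n-i : i \in \set(\beta)\} = \{\beta_k, \beta_k+\beta_{k-1},\ldots,\beta_k+\cdots+\beta_2\} = \set(\beta^r)$, which verifies the identity.

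Combining these steps yields
\[
\hat{\uptheta}\!\left[\bigoplus_{\beta \in [\ul{\bb\alpha}]} \PP_{\set(\beta)^c}\right] \cong \bigoplus_{\beta \in [\ul{\bb\alpha}]} \PP_{\set(\beta^r)},
\]
which is the desired projective cover. The only nontrivial step is really the bookkeeping identity between $w_0\set(\beta)^c w_0$ and $\set(\beta^r)$, and I do not anticipate any serious obstacle beyond this computation; the structural content is entirely carried by the remark identifying $\mathcal{W}_\alpha$ with $\hat{\uptheta}[\mathcal{V}_\alpha]$, the fact that dual equivalences interchange projective covers and injective hulls, and Corollary~\ref{cor:PIJtwist}.
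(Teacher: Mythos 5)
Your proposal is correct and follows essentially the same route as the paper: transport the known injective hull $\bigoplus_{\beta \in [\ul{\bb\alpha}]} \PP_{\set(\beta)^c}$ of $\mathcal{V}_\alpha$ through the dual equivalence $\hat{\uptheta}$ using $\mathcal{W}_\alpha \cong \hat{\uptheta}[\mathcal{V}_\alpha]$, then identify the summands via Corollary~\ref{cor:PIJtwist} and the identity $(w_0\,\set(\beta)^c\,w_0)^c = \set(\beta^r)$. The only difference is that you spell out the combinatorial verification of that identity, which the paper merely asserts; your computation of it is correct.
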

\begin{proof} 
Since $M \mapsto \hat{\uptheta}[M]$ is a dual equivalence of categories and $\mathcal{W}_\alpha \cong \hat{\uptheta}[\mathcal{V}_{\alpha}]$, the projective cover of $\mathcal{W}_\alpha$ is $\hat{\uptheta}[\oplus_{\beta \in [\ul{\bb\alpha}]}\PP_{{\rm set}(\beta)^c}]$. One obtains
\begin{align*}
    \hat{\uptheta}[\oplus_{\beta \in [\ul{\bb\alpha}]} \PP_{{\rm set}(\beta)^c}] = \oplus_{\beta \in [\ul{\bb\alpha}]} \hat{\uptheta}[\PP_{{\rm set}(\beta)^c}] \cong \oplus_{\beta \in [\ul{\bb\alpha}]} \PP_{(w_0{\rm set}(\beta)^c w_0)^c} = \oplus_{\beta \in [\ul{\bb\alpha}]} \PP_{{\rm set}(\beta^r)},
\end{align*}
where the isomorphisms follow from Corollary~\ref{cor:PIJtwist} and the fact that $(w_0{\rm set}(\beta)^c w_0)^c = {{\rm set}(\beta^r)}$, with $i\in {\rm set}(\beta)^c$ understood as $s_i$ for the purpose of conjugating by $w_0$.
\end{proof}
To our knowledge, the injective hull of $X_\alpha$ and projective cover of $\mathcal{Z}_\alpha$ have not yet been determined.

Finally we consider modules for the quasiysmmetric Schur functions \cite{HLMvW11:qs}, which were defined on \emph{standard reverse composition tableaux} by Tewari and van Willigenburg in \cite{TvW:1}. These modules were generalised by Tewari and van Willigenburg in \cite{TvW:2} to modules $\textbf{S}_\alpha^\sigma$ defined on \emph{standard permuted composition tableaux}. Here $\alpha$ is a composition and $\sigma$ a permutation; see \cite[Section 3]{TvW:2} for a full definition of these modules. The modules $\textbf{S}_\alpha^\sigma$ decompose as a direct sum of submodules $\textbf{S}_\alpha^\sigma = \oplus_E\textbf{S}_{\alpha, E}^\sigma$, where each $E$ is an equivalence class of standard permuted composition tableaux. Each of the submodules $\textbf{S}_{\alpha, E}^\sigma$ is indecomposable; this was proved for $\sigma = {\rm id}$ by K\"onig \cite[Theorem 4.11]{Konig}, and in general by Choi, Kim, Nam and Oh \cite[Theorem 3.1]{CKNO:tableaux}. 

Jung, Kim, Lee and Oh define a reading word ${\text{rw}_{\mathcal{S}}}$ on the standard permuted composition tableaux (\cite[Definition 4.6]{JKLO}). Let $\tau_E$ (respectively, $\tau'_E$) denote the standard permuted composition tableau in $E$ that has shortest (respectively, longest) reading word. It is proved in \cite[Theorem 4.8]{JKLO} that 
\begin{align}
\textbf{S}_{\alpha,E}^\sigma \cong \B({\text{rw}_{\mathcal{S}}}(\tau_E),{\text{rw}_{\mathcal{S}}}(\tau'_E)), \label{eq:qschurbruhatinterval}
\end{align}
and that ${\text{rw}_{\mathcal{S}}}(\tau_E)$ is the shortest element of some right descent class. We note however that these weak Bruhat interval modules typically contain elements from more than one right descent class, and therefore Proposition~\ref{prop:indecomposablebruhat} does not apply.
   
The projective cover of $\textbf{S}_{\alpha,E}^\sigma$  was determined in \cite[Theorem 5.11]{CKNO:projective} in terms of a generalised composition associated to $E$. 
Since $\textbf{S}_{\alpha,E}^\sigma$ is a weak Bruhat interval module whose shortest element is the shortest element of a right descent class \eqref{eq:qschurbruhatinterval}, Corollary~\ref{cor:cover} recovers this result, with a different statement in terms of the right descent sets of the reading words of the tableaux $\tau_E$ and $\tau'_E$. 
 
\begin{theorem}\label{thm:qschurcover} 
Suppose ${\textup{rw}_{\mathcal{S}}}(\tau_E) \in \mathcal{D}_I$ and ${\textup{rw}_{\mathcal{S}}}(\tau_E') \in \mathcal{D}_J$. Then $\PP_{I}^J$ is the projective cover of $\textbf{S}_{\alpha,E}^\sigma$.
\end{theorem}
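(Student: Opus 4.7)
The plan is to reduce the claim to Corollary~\ref{cor:cover} by combining the weak Bruhat interval realisation of $\textbf{S}_{\alpha,E}^\sigma$ established in \cite{JKLO} with the elementary fact that left weak Bruhat order refines containment of right descent sets. First, I would invoke the isomorphism \eqref{eq:qschurbruhatinterval}, namely $\textbf{S}_{\alpha,E}^\sigma \cong \B({\text{rw}_{\mathcal{S}}}(\tau_E),{\text{rw}_{\mathcal{S}}}(\tau'_E))$, to translate the problem into determining the projective cover of this weak Bruhat interval module. Since the excerpt records (after \eqref{eq:qschurbruhatinterval}) that ${\text{rw}_{\mathcal{S}}}(\tau_E)$ is the shortest element of a right descent class, together with the hypothesis ${\text{rw}_{\mathcal{S}}}(\tau_E) \in \mathcal{D}_I$ this yields ${\text{rw}_{\mathcal{S}}}(\tau_E) = u_I$.

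The remaining ingredient is to verify the hypothesis $I\subseteq J$ of Corollary~\ref{cor:cover}. Since $[{\text{rw}_{\mathcal{S}}}(\tau_E), {\text{rw}_{\mathcal{S}}}(\tau'_E)]_L$ is a nonempty weak Bruhat interval, $u_I \le_L {\text{rw}_{\mathcal{S}}}(\tau'_E)$. I would then establish the general fact that if $u \le_L v$ in $W$, then $\D_R(u) \subseteq \D_R(v)$: writing $v = s_1\cdots s_k u$ with $\ell(v) = \ell(u)+k$, any $s \in \D_R(u)$ gives $\ell(vs) = \ell(s_1\cdots s_k us) \le k + \ell(us) = \ell(v)-1$, forcing $\ell(vs) = \ell(v)-1$, so $s\in \D_R(v)$. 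Applying this to $u_I \le_L {\text{rw}_{\mathcal{S}}}(\tau'_E)$ yields $I = \D_R(u_I) \subseteq \D_R({\text{rw}_{\mathcal{S}}}(\tau'_E)) = J$.

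With $I \subseteq J$ verified and ${\text{rw}_{\mathcal{S}}}(\tau'_E) \in \mathcal{D}_J$ given, Corollary~\ref{cor:cover} applies directly with $w = {\text{rw}_{\mathcal{S}}}(\tau'_E)$, showing that $\PP_I^J$ is the projective cover of $\B(u_I, {\text{rw}_{\mathcal{S}}}(\tau'_E)) \cong \textbf{S}_{\alpha,E}^\sigma$, which is the desired conclusion. There is no serious obstacle here: the substantive content is packed into \eqref{eq:qschurbruhatinterval} and Corollary~\ref{cor:cover}, and the only nontrivial verification is the monotonicity of right descent sets under $\le_L$, a standard one-line computation. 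The main conceptual point worth emphasising is that the right descent data of the reading words $\tau_E$ and $\tau'_E$ are precisely the parameters that determine the projective cover, giving a clean type-A-compatible alternative to the generalised composition description of \cite[Theorem 5.11]{CKNO:projective}.
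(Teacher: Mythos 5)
Your proposal is correct and follows essentially the same route as the paper: invoke the isomorphism \eqref{eq:qschurbruhatinterval} together with the fact that ${\text{rw}_{\mathcal{S}}}(\tau_E)=u_I$, and then apply Corollary~\ref{cor:cover}. The only difference is that you explicitly verify the hypothesis $I\subseteq J$ via the monotonicity of right descent sets under $\le_L$, a detail the paper's two-line proof leaves implicit; your verification of it is correct.
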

\begin{proof} From \eqref{eq:qschurbruhatinterval} we have that $\textbf{S}_{\alpha,E}^\sigma \cong \B(u_I, w)$ for some $w \in \mathcal{D}_J$ and $I \subseteq S$. Therefore $\PP_{I}^J$ is the projective cover of $\textbf{S}_{\alpha,E}^\sigma$ by Corollary~\ref{cor:cover}.
\end{proof}

The images of the modules ${\bf S}_\alpha^\sigma$ and ${\bf S}_{\alpha, E}^\sigma$ under $\hat{\upomega}$ are a family of modules that generalise the modules introduced in \cite{Bardwell.Searles} for the Young row-strict dual immaculate functions of \cite{Mason.Niese}. Specifically, denoting these modules by $\textbf{R}_\alpha^{\sigma}$ and $\textbf{R}_{\alpha, E}^{\sigma}$, one has $\textbf{R}_\alpha^{\sigma} \cong \hat{\upomega}[\textbf{S}^{\sigma^{w_0}}_{\alpha^r}]$ and, when $E$ is an equivalence class of standard permuted composition tableaux corresponding to $\alpha_r$ and $\sigma^{w_0}$,  $\textbf{R}_{\alpha, E}^{\sigma} \cong \hat{\upomega}[\textbf{S}^{\sigma^{w_0}}_{\alpha^r, E}]$ (\cite[Proposition 4.11]{JKLO}).
The injective hull of $\textbf{R}_{\alpha,E}^{\sigma}$ was determined in \cite[Corollary 4.13]{JKLO}, via $\hat{\upomega}$. Applying $\hat{\upomega}$ to the statement of Theorem~\ref{thm:qschurcover} yields the following description of the injective hull in terms of the right descent sets of the reading words of $\tau_E$ and $\tau_E'$.

\begin{corollary} 
Let $E$ be an equivalence class of standard permuted composition tableaux corresponding to $\alpha^r$ and $\sigma^{w_0}$. Suppose ${\textup{rw}_{\mathcal{S}}}(\tau_E) \in \mathcal{D}_I$ and ${\textup{rw}_{\mathcal{S}}}(\tau_E') \in \mathcal{D}_J$. Then $\PP_{S \setminus J}^{S \setminus I}$ is the injective hull of $\textbf{R}_{\alpha,E}^{\sigma}$.
\end{corollary}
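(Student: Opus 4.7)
The plan is to transport the projective cover statement of Theorem~\ref{thm:qschurcover} across the dual equivalence $\hat{\upomega}$. Since $E$ is assumed to be an equivalence class of standard permuted composition tableaux corresponding to $\alpha^r$ and $\sigma^{w_0}$, Theorem~\ref{thm:qschurcover} applies directly to $\textbf{S}_{\alpha^r, E}^{\sigma^{w_0}}$: under the hypothesis $\textup{rw}_{\mathcal{S}}(\tau_E) \in \mathcal{D}_I$ and $\textup{rw}_{\mathcal{S}}(\tau_E') \in \mathcal{D}_J$, the projective cover of $\textbf{S}_{\alpha^r, E}^{\sigma^{w_0}}$ is $\PP_I^J$.

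Next, I would invoke the isomorphism $\textbf{R}_{\alpha, E}^{\sigma} \cong \hat{\upomega}[\textbf{S}_{\alpha^r, E}^{\sigma^{w_0}}]$ from \cite[Proposition 4.11]{JKLO}, cited in the paragraph preceding the statement. Because $M \mapsto \hat{\upomega}[M]$ is a dual equivalence of $H_W(0)$-mod (as noted in the discussion before Theorem~\ref{thm:injectivehull}), it interchanges projective covers and injective hulls: if $P$ is the projective cover of $M$, then $\hat{\upomega}[P]$ is the injective hull of $\hat{\upomega}[M]$. Applying this principle to $M = \textbf{S}_{\alpha^r, E}^{\sigma^{w_0}}$ and $P = \PP_I^J$, the injective hull of $\textbf{R}_{\alpha, E}^{\sigma}$ is $\hat{\upomega}[\PP_I^J]$.

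Finally, Corollary~\ref{cor:PIJtwist} immediately identifies $\hat{\upomega}[\PP_I^J] \cong \PP_{S \setminus J}^{S \setminus I}$, which completes the argument. No technical obstacle is expected here: all three ingredients (the projective cover of $\textbf{S}_{\alpha^r, E}^{\sigma^{w_0}}$, the functorial identification $\textbf{R}_{\alpha, E}^{\sigma} \cong \hat{\upomega}[\textbf{S}_{\alpha^r, E}^{\sigma^{w_0}}]$, and the image of $\PP_I^J$ under $\hat{\upomega}$) are already available in the paper, so the proof is essentially a one-line composition. The only point requiring minor care is checking that the indexing hypothesis (that $E$ corresponds to $\alpha^r$ and $\sigma^{w_0}$) matches the hypothesis of Theorem~\ref{thm:qschurcover} applied to $\textbf{S}_{\alpha^r,E}^{\sigma^{w_0}}$, which it does by construction.
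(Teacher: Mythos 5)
Your proposal is correct and matches the paper's own proof step for step: apply Theorem~\ref{thm:qschurcover} to $\textbf{S}_{\alpha^r,E}^{\sigma^{w_0}}$, transport the projective cover through the dual equivalence $\hat{\upomega}$ to get the injective hull of $\textbf{R}_{\alpha,E}^{\sigma}$, and identify $\hat{\upomega}[\PP_I^J]\cong\PP_{S\setminus J}^{S\setminus I}$ via Corollary~\ref{cor:PIJtwist}. No gaps.
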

\begin{proof}
By Theorem~\ref{thm:qschurcover} we have that $\PP_I^J$ is the projective cover of $\textbf{S}_{\alpha^r,E}^{\sigma^{w_0}}$. Therefore $\hat{\upomega}[\PP_{I}^J]$ is the injective hull of $\textbf{R}_{\alpha,E}^{\sigma}$, and  $\hat{\upomega}[\PP_{I}^J] \cong \PP_{S\setminus J}^{S \setminus I}$ by Corollary~\ref{cor:PIJtwist}.
\end{proof} 

%%%%%%%%%%%%%%%%%%%%%%%%%%%%%%%%%%%%%%%%%%%%%%%%%%

\section*{Acknowledgements}
The authors are grateful to Jia Huang and Woo-Seok Jung for helpful conversations. 
The authors were supported by the Marsden Fund, administered by the Royal Society of New Zealand Te Ap{\= a}rangi.

\bibliographystyle{abbrv} 
\bibliography{finitetype}

\end{document}